\newcommand{\Hil}{\mathcal{H}}
\newcommand{\Acal}{\mathcal{A}}
\newcommand{\R}{\mathbb{R}}
\newcommand{\eps}{\varepsilon}
\newcommand{\ann}{B_{2a}(0)}
\newcommand{\dx}{\frac{\partial}{\partial x}}
\newcommand{\dy}{\frac{\partial}{\partial y}}
\newcommand{\doublewidetilde}[1]{{%
  \mathpalette\double@widetilde{#1}%
}}
\newcommand{\double@widetilde}[2]{%
  \sbox\z@{$\m@th#1\widetilde{#2}$}%
  \ht\z@=.9\ht\z@
  \widetilde{\box\z@}%
}
\newcommand{\ph}{\varphi}
\newcommand{\vertiii}[1]{{\left\vert\kern-0.25ex\left\vert\kern-0.25ex\left\vert #1 
    \right\vert\kern-0.25ex\right\vert\kern-0.25ex\right\vert}}
\newtheorem{theorem}{Theorem}
\newtheorem{remark}[theorem]{Remark}
\newtheorem{proposition}[theorem]{Proposition}
\newtheorem{lemma}[theorem]{Lemma}
\newtheorem{corollary}[theorem]{Corollary}
\numberwithin{theorem}{section}
\numberwithin{equation}{section}
\DeclareMathOperator*{\argmin}{arg\,min}
\title{Gabor phase retrieval is severely ill-posed}
\author{Rima Alaifari and Philipp Grohs}
\begin{document}
\maketitle
\begin{abstract}
The problem of reconstructing a function from the magnitudes of its frame coefficients has recently been shown to be never uniformly stable in infinite-dimensional spaces \cite{cahill2016phase}. This result also holds for frames that are possibly continuous \cite{alaifari2017phase}. On the other hand, in finite-dimensional settings, unique solvability of the problem implies uniform stability.

A prominent example of such a phase retrieval problem is the recovery of a signal from the modulus of its Gabor transform. In this paper, we study Gabor phase retrieval and ask how the stability degrades on a natural family of finite-dimensional subspaces of the signal domain $L^2(\mathbb{R})$. We prove that the stability constant scales at least quadratically exponentially in the dimension of the subspaces. Our construction also shows that typical priors such as sparsity or smoothness promoting penalties do not constitute regularization terms for phase retrieval. 
\end{abstract}

\section{Introduction}\label{sec:intro}
Phase retrieval is a broad term for reconstruction problems, in which one aims at recovering a signal, a density distribution or some other quantity from complex- (or real-valued) measurements that do not contain phase (or sign) information. The most prominent example is X-ray crystallography that can be modeled as the reconstruction of an electron density distribution from the modulus of its Fourier transform. 

More formally, one can state the phase retrieval problem for any Hilbert space $\Hil$ and an associated measurement system $(\psi_\lambda)_{\lambda \in \Lambda} \subset \Hil$ which constitutes a frame (that is not necessarily discrete, depending on the index set $\Lambda$). While any $f \in \Hil$ can be uniquely and stably recovered from $\{ \langle f, \psi_\lambda \rangle_\Hil \}_{\lambda \in \Lambda}$, these properties are not so evident for the reconstruction of $f$ from the phaseless measurements $\{ |\langle f, \psi_\lambda \rangle_\Hil| \}_{\lambda \in \Lambda}$.

In the real-valued setting, i.e. when $f$ and the measurement system $(\psi_\lambda)_{\lambda \in \Lambda}$ are real-valued, the so-called \emph{complement property (CP)} is a necessary and sufficient condition on $(\psi_\lambda)_{\lambda \in \Lambda}$ for injectivity of the phase retrieval problem, both in the case when $\Hil$ is finite-dimensional \cite{balan2006signal} and infinite-dimensional \cite{cahill2016phase, alaifari2017phase}. In the complex-valued case, the CP is only a necessary condition for injectivity \cite{balan2006signal, cahill2016phase, alaifari2017phase}. A stronger condition on the family $(\psi_\lambda)_{\lambda \in \Lambda}$ is the \emph{strong complement property (SCP)}. Introduced in \cite{bandeira2014saving} for the finite-dimensional case, it is shown that the SCP is a necessary and sufficient condition for stable phase retrieval in the real-valued setting and also conjectured, that the SCP is a necessary condition for stability in the complex-valued setting. In \cite{alaifari2017phase}, we consider the phase retrieval problem for the general framework of infinite-dimensional Banach spaces and associated continuous Banach frames. We prove that in this generalized setting the SCP is necessary and sufficient for stable phase retrieval in the real-valued case and indeed necessary in the complex-valued case. In addition, we show that the SCP can never hold in infinite-dimensional spaces, hence phase retrieval is never uniformly stable in infinite dimensions. Such a result has been derived before in \cite{cahill2016phase} for the case of discrete frames of Hilbert spaces without employing the SCP. In the case of a measurement system consisting of the Cauchy wavelet transform, uniqueness and the lack of uniform stability have first been shown in \cite{mallat2015phase}. 

While these results suggest that the instability of phase retrieval is truly inherent of the problem, numerical experiments indicate that phase retrieval is unstable whenever the phaseless measurements $\{ |\langle f, \psi_\lambda \rangle_\Hil| \}_{\lambda \in \Lambda}$ carry modes of silence, i.e., whenever the magnitude measurements are concentrated on at least two disconnected components of $\Lambda$ and small outside of these regions. This observation has led the authors, together with Ingrid Daubechies and Rujie Yin, to study the phase retrieval problem in a relaxed notion of \emph{atoll functions} for which stability can in fact be restored \cite{alaifari2016stable} when the measurement system is the Gabor transform or the Cauchy wavelet transform. One of the authors, together with Martin Rathmair, has recently derived significantly improved stability estimates for the Gabor transform case and, in particular, showed that all instabilities are of this kind \cite{grohs2017stable}, thereby providing a rather complete picture of the stability properties of Gabor phase retrieval.

This paper likewise studies the reconstruction of functions in $L^2(\mathbb{R})$ from the magnitude of their Gabor transforms, i.e., of their windowed Fourier transforms with Gaussian window $\ph(t)=e^{-\pi t^2}$. This problem arises in ptychographic imaging, a variation of X-ray crystallography in which an electron density distribution is reconstructed from the modulus of its short-time/windowed Fourier transform, or STFT in short. Gabor phase retrieval can also be employed in certain audio processing applications such as the phase vocoder. A phase vocoder is a device that modifies audio signals (such as time-stretching or pitch-shifting them) and can be implemented by retrieving phase information from the STFT modulus.

The Gabor transform $V_\ph f$ of a function $f \in L^2(\mathbb{R})$ is defined as
\begin{equation}
V_\ph f (x,y) := \int_{\mathbb{R}} f(t) \ph(t-x) e^{-2 \pi i t y}  dt.
\end{equation}
The problem of Gabor phase retrieval then amounts to recovering $f$ from $\{|V_\ph f(x,y)|\}_{(x,y) \in \mathbb{R}^2}$. Note that phase retrieval is always to be understood up to a global phase (or sign) factor: Given magnitude measurements of $f$, one will never be able to distinguish $f$ from $\tau f$, for any $\tau \in S^1:=\{z \in \mathbb{C} \mid |z|=1\}$. Thus, to make the notions of uniqueness and stability meaningful, one has to discard global phase factors which can be done by introducing the following metric:
$$
	\mbox{dist} (f,g) := \inf_{\tau \in S^1} \| f - \tau g \|_{L^2(\mathbb{R})}.
$$ 
Gabor phase retrieval would be stable (in a strong sense), if there existed uniform constants $c_1, c_2>0$ such that for all $f, g \in L^2(\mathbb{R})$
\begin{equation}\label{eqn:stab}
c_1 \mbox{dist} (f,g) \leq \| |V_\ph f| - |V_\ph g| \|_{L^2(\mathbb{R}^2)} \leq c_2 \mbox{dist} (f,g).
\end{equation}
As discussed above, we know that such uniform stability cannot hold. Instead, we want to quantify this instability by studying the degradation of stability on nested finite-dimensional subspaces of $L^2(\mathbb{R})$. We note that in \cite{cahill2016phase}, Cahill, Casazza \& Daubechies give such an example for a sign retrieval problem in 1D: the recovery of a real-valued bandlimited function from its unsigned samples. There, the authors consider finite-dimensional subspaces of the Paley-Wiener space and demonstrate that the stability constant for this sign retrieval problem grows exponentially in the dimension of the subspace.

\textbf{Contributions.} In this paper, we study two questions for Gabor phase retrieval:
\begin{itemize}
\item Can the degree of ill-posedness of this inverse problem be quantified in some sense? 
\item Is it possible to regularize the problem with a prior on the smoothness or sparsity of the solution?
\end{itemize}
For answering the first question, we construct pairs of functions $(f_a^+, f_a^-)$ that depend on a positive parameter $a \in \mathbb{R}$. Then, for (\ref{eqn:stab}) to hold for some $c_1$ and this particular pair of functions, we show $c_1 \lesssim e^{-a^2 \pi/2}$. Such exponential degradation of stability for inverse problems is often referred to as \emph{severe} ill-posedness, a term more commonly used for problems with linear operators that have purely discrete spectrum (cf. Remark \ref{rem:main}). Our construction also gives an insight to the answer of the second question: the functions used are both smooth and have sparse approximations in any time-frequency/time-scale representation. Therefore, this ill-posed problem cannot be regularized through a smoothness penalty or a sparsity-promoting penalty.

\textbf{Outline.} In Section \ref{sec:inverse-problem} we briefly give the link between the phase retrieval operator and general inverse problems theory for non-linear operators, showing that the phase retrieval operator is not compact. In Section \ref{sec:lowerbound}, we construct a pair of functions, depending on some parameter $a$, and state that for such a pair the stability constant of Gabor phase retrieval is at least exponential in $a^2$. The parameter $a$ can also be linked to the dimension of a subspace of $L^2(\mathbb{R})$ so that the stability constant scales at least quadratically exponentially in the dimension of the problem. Based on our construction, one can then deduce that the typical remedy to ill-posedness -- adding a regularization penalty -- does not help in phase retrieval, see Section \ref{subsec:regularization}. The proof of our main proposition is presented in Section \ref{sec:proofs} and some proofs of Section \ref{sec:sparse_reg} are outsourced to Section \ref{sec:proofs_sparse_reg}.

\section{Inversion of the Gabor phase retrieval operator}\label{sec:inverse-problem}

We may formulate Gabor phase retrieval as the inverse problem with the nonlinear forward operator

\begin{equation}\label{def:fwdop}
\Acal_\ph: L^2(\mathbb{R})/S^1 \to L^2(\mathbb{R}^2,\mathbb{R}_0^+),\quad
	f\mapsto |V_\ph f|,
\end{equation}
where $|V_\ph f|$ denotes the Gabor transform magnitude, $|V_\ph f|(x,y):= |V_\ph f(x,y)|$, for all $(x,y) \in \mathbb{R}^2$. The operator $\Acal_\ph$ has the following properties:
\begin{proposition}\label{prop:weak-seq-closed}
The operator $\Acal_\ph$ is injective and weakly sequentially closed. Moreover, its range $\mathcal{R}(\Acal_\ph)$ is closed and $\Acal_\ph^{-1}$ is continuous on $\mathcal{R}(\Acal_\ph)$.
\end{proposition}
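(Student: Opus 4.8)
The plan is to prove the four assertions in the order injectivity, weak sequential closedness, closedness of the range, and continuity of the inverse, since the last two will follow from the first two together with the norm-preservation of $V_\ph$. For \emph{injectivity} I would exploit that, for the Gaussian window, $V_\ph f$ agrees up to a fixed nowhere-vanishing factor with the Bargmann transform $Bf$, an entire function, where $B$ is a constant multiple of a unitary map from $\Ltwo$ onto the Fock space of entire functions. Consequently $|V_\ph f|$ determines $|Bf|$ on all of $\C$. If $|V_\ph f|=|V_\ph g|$, then $|Bf|=|Bg|$ pointwise, so $Bf$ and $Bg$ share their zeros with the same multiplicities (the local order of vanishing is read off from the modulus); hence $Bf/Bg$ extends to a zero-free entire function of modulus $1$, which by Liouville is a unimodular constant $\tau$. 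Then $Bf=\tau Bg$, and injectivity of $B$ gives $f=\tau g$, i.e. $f=g$ in $\Ltwo/S^1$.

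For \emph{weak sequential closedness}, the basic observation is that $V_\ph f(x,y)=\langle f,\ph_{x,y}\rangle$ with $\ph_{x,y}(t)=\ph(t-x)e^{2\pi\i t y}$, so a weak limit $f_n\rightharpoonup f$ in $\Ltwo$ forces $V_\ph f_n(x,y)\to V_\ph f(x,y)$ for every $(x,y)$, and therefore $|V_\ph f_n|\to|V_\ph f|$ pointwise on $\Rtwo$. If in addition $|V_\ph f_n|\rightharpoonup g$ weakly in $L^2(\Rtwo)$, I would identify $g=|V_\ph f|$ via the standard fact that in $L^2$ a weak limit and an almost-everywhere limit coincide: on any set of finite measure one combines boundedness of $(|V_\ph f_n|)$ (weakly convergent sequences are bounded) with Egorov's theorem to pass to the limit against test functions. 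This gives $\Acal_\ph f=g$, which is exactly weak sequential closedness.

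For the \emph{closed range} and \emph{continuous inverse}, the crucial extra ingredient is Moyal's orthogonality relation, which yields $\||V_\ph f|\|_{L^2(\Rtwo)}=\|V_\ph f\|_{L^2(\Rtwo)}=2^{-1/4}\|f\|_{\Ltwo}$, so that $\Acal_\ph$ preserves norms up to a fixed constant. If $g_n=\Acal_\ph f_n\to g$ in $L^2(\Rtwo)$, then $(f_n)$ is bounded, a subsequence satisfies $f_{n_k}\rightharpoonup h$, and weak sequential closedness gives $\Acal_\ph h=g$, so $g\in\Rcal(\Acal_\ph)$ and the range is closed. For continuity of $\Acal_\ph^{-1}$, take $g_n=\Acal_\ph f_n\to g=\Acal_\ph f$; along any subsequence extract $f_{n_k}\rightharpoonup h$, identify $\Acal_\ph h=g=\Acal_\ph f$ by weak sequential closedness, and conclude $h=\tau f$ for some $\tau\in S^1$ by injectivity. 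Norm-preservation then gives $\|f_{n_k}\|\to 2^{1/4}\|g\|=\|f\|=\|h\|$, and weak convergence together with convergence of norms yields strong convergence $f_{n_k}\to h=\tau f$ by the Radon--Riesz property of the Hilbert space. Hence $\dist(f_{n_k},f)\to0$ along every subsequence, so $\dist(f_n,f)\to0$.

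I expect the genuinely delicate points to be two. First, in the injectivity step one must justify carefully that equality of the moduli of two entire functions transfers the zero multiplicities, so that the quotient is truly entire and Liouville applies. Second, and more importantly for the stability statement, every argument must be carried out in the quotient $\Ltwo/S^1$: the global phase $\tau$ supplied by injectivity depends a priori on the chosen subsequence, which is precisely why I phrase the continuity proof through the subsequence principle and the metric $\dist$ rather than fixing a single phase in advance. The matching of the weak and pointwise limits in the weak-closedness step (the Egorov argument) is routine, but it is where the actual analytic work sits.
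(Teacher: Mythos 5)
Your proposal is correct, and it is considerably more self-contained than the paper's own proof, which only argues weak sequential closedness directly and outsources the other three claims: injectivity is cited from the Bargmann-transform/entire-function argument in the literature (exactly the route you sketch, including the transfer of zero multiplicities and Liouville), while closedness of $\mathcal{R}(\Acal_\ph)$ and continuity of $\Acal_\ph^{-1}$ are cited from the authors' earlier work. For the one part the paper does prove, your argument coincides with theirs: weak convergence $f_n\rightharpoonup f$ gives pointwise convergence of $V_\ph f_n(x,y)=\langle f_n,\ph_{x,y}\rangle$, and the weak and a.e.\ limits in $L^2(\Rtwo)$ must agree (the paper simply asserts this standard fact; your Egorov justification is a fine way to prove it). Your derivation of closed range and of continuity of the inverse from Moyal's identity, weak compactness of bounded sets, weak sequential closedness, injectivity, and the Radon--Riesz property is sound, and you are right that the phase $\tau$ must be allowed to depend on the subsequence, which is why the conclusion has to be phrased in the quotient metric $\dist$; the sub-subsequence principle then upgrades convergence along subsequences to convergence of the full sequence. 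What your approach buys is a complete, reference-free proof; what the paper's approach buys is brevity, since all the cited ingredients are standard for the Gaussian window. No gaps.
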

\begin{proof}
The injectivity of $\Acal_\ph$ is well-known, see e.g. \cite{grohs2017stable}. The fact that $\mathcal{R}(\Acal_\ph)$ is closed and the continuity of $\Acal_\ph^{-1}$ on $\mathcal{R}(\Acal_\ph)$ have been shown in  \cite{alaifari2017phase}. It only remains to establish that the operator is weakly sequentially closed. For this, we need to show that for a sequence $\{f_n\}_{n \in \mathbb{N}} \subset L^2(\mathbb{R})$ converging weakly to $f \in L^2(\mathbb{R})$ the weak convergence of $\{|V_\ph f_n|\}_{n \in \mathbb{N}}$ to $v \in L^2(\mathbb{R}^2,\mathbb{R}_0^+)$ implies $|V_\ph f| = v$. This, however, is immediate, as the weak convergence of $\{f_n\}_{n \in \mathbb{N}}$ implies pointwise convergence of $|V_\ph f_n|$ to $|V_\ph f|$. In $L^2(\mathbb{R}^2)$, the pointwise and weak limit of a sequence must coincide, so that $|V_\ph f| = v$.
\end{proof}
For compact linear operators $C:X \to Y$, it is well known that a sufficient condition for ill-posedness is injectivity of $C$, provided that $X$ is infinite-dimensional \cite{engl1996regularization}. A similar result exists for nonlinear operators $A: X \to Y$. Proposition 10.1 in \cite{engl1996regularization} implies that if $A$ is compact, injective and weakly sequentially closed and $X$ is an infinite-dimensional separable Hilbert space, then $A^{-1}$ is not continuous (the original statement is stronger in that it implies ill-posedness from local properties of the operator). 

This result seems in contrast to Proposition \ref{prop:weak-seq-closed}. The reason these results are not in contradiction is that $\Acal_\ph$ is not compact:
\begin{proposition}\label{prop:not-compact}
The operator $\Acal_\ph$ is not compact.
\end{proposition}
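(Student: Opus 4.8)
The plan is to exhibit a bounded sequence in $L^2(\mathbb{R})/S^1$ whose image under $\Acal_\ph$ has no convergent subsequence in $L^2(\mathbb{R}^2)$; since compactness in the sense of \cite{engl1996regularization} means that bounded sets are mapped to relatively compact sets, this will establish the claim. The mechanism I would exploit is the translation covariance of the Gabor transform magnitude: shifting a signal in time merely shifts $|V_\ph f|$ in the time-frequency plane, which lets me move a fixed image pattern off to infinity while keeping its norm constant.

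Concretely, I would fix any $f_0 \in L^2(\mathbb{R})$ with $f_0 \neq 0$ and set $f_n(t) := f_0(t-n)$ for $n \in \mathbb{N}$. As translation is an isometry of $L^2(\mathbb{R})$, the sequence $\{f_n\}$ is bounded (indeed $\dist(f_n,0) = \|f_n\|_{L^2(\mathbb{R})} = \|f_0\|_{L^2(\mathbb{R})}$). A change of variables in the definition of $V_\ph f$ gives $V_\ph f_n(x,y) = e^{-2\pi \i n y}\, V_\ph f_0(x-n,y)$, so that, writing $g := |V_\ph f_0|$, the images are pure translates of one fixed function,
\[
\Acal_\ph f_n\,(x,y) = |V_\ph f_n|(x,y) = g(x-n,y).
\]
By the orthogonality relations for the short-time Fourier transform, $\|g\|_{L^2(\mathbb{R}^2)} = \|V_\ph f_0\|_{L^2(\mathbb{R}^2)} = \|f_0\|_{L^2(\mathbb{R})}\,\|\ph\|_{L^2(\mathbb{R})} =: c > 0$, and each $\Acal_\ph f_n$ has this same norm $c$.

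It then remains to rule out a convergent subsequence. For $m \neq n$ one computes, using that $g$ is real and nonnegative and that translation is unitary on $L^2(\mathbb{R}^2)$,
\[
\|\Acal_\ph f_n - \Acal_\ph f_m\|_{L^2(\mathbb{R}^2)}^2 = 2c^2 - 2\int_{\mathbb{R}^2} g(x,y)\,g\big(x-(m-n),y\big)\,dx\,dy,
\]
and the elementary fact that $\int_{\mathbb{R}^2} g(x,y)\,g(x-s,y)\,dx\,dy \to 0$ as $|s|\to\infty$ (by approximating $g$ in $L^2$ by compactly supported functions and applying Cauchy--Schwarz) shows that this expression tends to $2c^2>0$ as $|m-n|\to\infty$. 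Hence every subsequence of $\{\Acal_\ph f_n\}$ fails to be Cauchy, so the image of the bounded set $\{f_n\}$ is not relatively compact and $\Acal_\ph$ is not compact. The same example in fact shows that $\Acal_\ph$ is not completely continuous either, since $f_n \rightharpoonup 0$ weakly while $\Acal_\ph f_n$ does not converge strongly. I expect the only delicate bookkeeping to be the passage through the quotient $L^2(\mathbb{R})/S^1$, but this is harmless here: the images $\Acal_\ph f_n$ are already phase-independent, and multiplying any $f_n$ by a unimodular constant leaves both its norm and the translate structure of its image unchanged.
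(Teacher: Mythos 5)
Your proof is correct, but it takes a genuinely different route from the paper's. The paper does not exhibit an explicit non-compact sequence directly in the image of $\Acal_\ph$; instead it first proves a lemma that $\mathcal{V}_B=\{V_\ph f : f\in B\}$ is strongly equicontinuous, then invokes two abstract results from \cite{alaifari2016stable} (a transfer principle saying $\Acal_\ph(B)$ is relatively compact iff $\mathcal{V}_B$ is, and a Kolmogorov--Riesz-type characterization of relative compactness in $L^2(\R^2)$ via uniform tightness outside compact sets), and finally observes that uniform tightness fails because a large time or frequency shift pushes the Gabor transform outside any fixed compact set. You instead work entirely with the modulus: you take the bounded sequence $f_n=T_nf_0$, use the covariance $|V_\ph T_nf_0|(x,y)=|V_\ph f_0|(x-n,y)$ to see that the images are translates of one fixed nonzero $g\in L^2(\R^2)$, and show by the vanishing of $\int g(x,y)g(x-s,y)\,dx\,dy$ as $|s|\to\infty$ that the image sequence is uniformly separated, hence has no Cauchy subsequence. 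Both arguments exploit the same underlying mechanism (translation covariance moving mass to infinity), but yours is self-contained, quantitative (the separation tends to $\sqrt{2}\,c$), and avoids any external compactness machinery, while the paper's approach additionally yields a clean characterization of exactly which sets $\mathcal{V}_B$ would be relatively compact, and its equicontinuity lemma is of independent use. All the steps you flag as potentially delicate are indeed harmless: the codomain $L^2(\R^2,\R_0^+)$ carries the plain $L^2$ metric, so the quotient on the domain plays no role in the image, and your Cauchy--Schwarz/truncation argument for the decay of the shifted inner product is standard and valid.
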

To prove this result we use the following 
\begin{lemma} Let $L>0$ and
$$
B:= \{f \in L^2(\mathbb{R})/S^1: \ \|f\|_{L^2(\mathbb{R})} \leq L\}.
$$
Then, the set 
$$
\mathcal{V}_B:= \{ V_\ph f: f \in B\} \subset L^2(\mathbb{R}^2)
$$
is strongly equicontinuous, meaning that for all $\eps >0$ and all $(x,y) \in \mathbb{R}^2$, there exists an open set $U_\eps((x,y)) \subset \mathbb{R}^2$ such that for all $(x',y') \in U_\eps ((x,y))$,
$$
\sup_{f \in B} |V_\ph f (x,y) - V_\ph f(x',y')| \leq \eps.
$$
\end{lemma}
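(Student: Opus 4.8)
The plan is to reduce the claimed equicontinuity, which is required to hold uniformly over $f \in B$, to a purely pointwise continuity statement about the time-frequency shifted window alone. This reduction exploits two facts: $V_\ph$ is linear in $f$, and $B$ is a norm-bounded set. Writing $\ph_{x,y}(t) := \ph(t-x)\,e^{-2\pi i t y}$ for the shifted-modulated Gaussian (so that $V_\ph f(x,y) = \intR f(t)\,\ph_{x,y}(t)\,dt$, using that $\ph$ is real-valued), we have
$$
V_\ph f(x,y) - V_\ph f(x',y') = \intR f(t)\,\big[\ph_{x,y}(t) - \ph_{x',y'}(t)\big]\,dt,
$$
and the Cauchy-Schwarz inequality together with $\|f\|_{\Ltwo} \le L$ gives
$$
\sup_{f \in B} \big|V_\ph f(x,y) - V_\ph f(x',y')\big| \le L\cdot \norm{\ph_{x,y} - \ph_{x',y'}}{\Ltwo}.
$$
The right-hand side no longer involves $f$, so it suffices to show that $(x,y) \mapsto \ph_{x,y}$ is continuous from $\Rtwo$ into $\Ltwo$ at the fixed point $(x,y)$; one then takes $U_\eps((x,y))$ to be any neighborhood on which $\norm{\ph_{x,y}-\ph_{x',y'}}{\Ltwo} \le \eps/L$.

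To establish this continuity I would split the increment into a pure translation part and a pure modulation part,
$$
\ph_{x,y} - \ph_{x',y'} = \big(\ph(\cdot-x) - \ph(\cdot-x')\big)e^{-2\pi i(\cdot)y} + \ph(\cdot-x')\big(e^{-2\pi i(\cdot)y} - e^{-2\pi i(\cdot)y'}\big),
$$
and estimate the two terms separately. Since modulation is an isometry on $\Ltwo$, the first term has norm $\norm{\ph(\cdot-x) - \ph(\cdot-x')}{\Ltwo}$, which tends to $0$ as $x' \to x$ by the strong continuity of translation on $\Ltwo$ (elementary here, as $\ph$ is Schwartz). For the second term I would use the pointwise bound $|e^{-2\pi i t y} - e^{-2\pi i t y'}| = |1 - e^{-2\pi i t(y'-y)}| \le 2\pi|t|\,|y-y'|$ to obtain
$$
\norm{\ph(\cdot-x')\big(e^{-2\pi i(\cdot)y} - e^{-2\pi i(\cdot)y'}\big)}{\Ltwo}^2 \le 4\pi^2 (y-y')^2 \intR t^2\,|\ph(t-x')|^2\,dt,
$$
and since $\ph$ is Gaussian the integral is finite and stays bounded as $x'$ ranges over a fixed bounded neighborhood of $x$, so this term is $O(|y-y'|)$.

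Combining the two estimates yields $\norm{\ph_{x,y} - \ph_{x',y'}}{\Ltwo} \to 0$ as $(x',y') \to (x,y)$, which is precisely the continuity needed, and the reduction above then completes the proof. I expect no serious obstacle: the whole argument rests on the boundedness of $B$ and on the strong continuity of the time-frequency shift of the Gaussian, both standard. The only point demanding slight care is that the modulation estimate be \emph{uniform} in $x'$ over a neighborhood of $x$, so that the limits $x' \to x$ and $y' \to y$ may be taken simultaneously; this uniformity is supplied by the rapid decay of the Gaussian, which makes $\intR t^2|\ph(t-x')|^2\,dt$ locally bounded. Alternatively, one could note that $(x,y)\mapsto \ph_{x,y}$ is in fact smooth into $\Ltwo$ because $\ph$ is Schwartz, giving the continuity at once, but the explicit splitting keeps the argument self-contained.
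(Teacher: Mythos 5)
Your proof is correct and follows essentially the same route as the paper: both reduce the uniform statement over $B$ via Cauchy--Schwarz to the continuity of $(x,y)\mapsto \ph_{x,y}$ as a map into $L^2(\R)$. The only difference is that the paper simply asserts this continuity, whereas you supply the (correct) translation/modulation splitting to verify it.
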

\begin{proof}
Denoting by $\ph_{x,y}$ the modulated and time-shifted Gaussian $\ph_{x,y}(t):= e^{2 \pi i t y} e^{-\pi(t-x)^2}$, the boundedness of elements in $B$ implies
$$
 |V_\ph f (x,y) - V_\ph f(x',y')| \leq L \cdot \| \ph_{x,y} - \ph_{x',y'} \|_{L^2(\mathbb{R}^2)}.
$$
The statement then follows, since for every $\eps > 0$, there is an open set $U_\eps ((x,y)) \subset \mathbb{R}^2$ such that
$$
\| \ph_{x,y} - \ph_{x',y'}\|_{L^2(\mathbb{R}^2)} \leq \frac{\eps}{L}.
$$
\end{proof}
With this property at hand, we can now give the proof of the non-compactness of $\Acal_\ph$:
\begin{proof}[Proof of Proposition \ref{prop:not-compact}]
Let $L>0$ and consider the image $\Acal_\ph(B)$ of the set 
$$
B:= \{f \in L^2(\mathbb{R})/S^1: \ \|f\|_{L^2(\mathbb{R})} \leq L\}.
$$
Since $\mathcal{V}_B$ is strongly equicontinuous by our previous lemma, we can use Corollary A.4 in our previous work \cite{alaifari2016stable}, to state that $\Acal_\ph(B)$ is relatively compact if and only if $\mathcal{V}_B$ is relatively compact. Theorem A.3 in \cite{alaifari2016stable} gives a characterization of $\mathcal{V}_B$ being relatively compact: the property holds if and only if $\mathcal{V}_B$ is bounded and for all $\eps>0$ there is a compact set $C_\eps \subset \mathbb{R}^2$ such that
$$
\sup_{w \in \mathcal{V}_B} \|w \|_{L^2(\mathbb{R}^2 \backslash C_\eps)} \leq \eps.
$$ 
It is clear that this cannot hold: for any $\eps >0$ and any choice of $C_\eps$ one can simply find a signal in $B$, for which the Gabor transform is concentrated outside of $C_\eps$ (realized by a sufficiently large time-delay or frequency shift). Hence, $\mathcal{V}_B$ and consequently also $\Acal_\ph(B)$ are not relatively compact.
\end{proof}
The continuity of $\Acal_\ph^{-1}$ is of course not as significant as in the case of linear operators: as we have shown in \cite{alaifari2016stable}, $\Acal_\ph^{-1}$ is not uniformly continuous, so that a quantitative result as in (\ref{eqn:stab}) cannot hold. In fact, although for $\Acal_\ph (f_n)$ converging to $u$, $f_n$ is guaranteed to converge to $\Acal_\ph^{-1}(u)$, so that the problem is \emph{not} ill-posed in the classical sense, the convergence can be arbitrarily slow.

In what follows, we will use the term \emph{ill-posed} to include also this lack of strong stability. We intend to characterize this ill-posedness with our goal being two-fold: on the one hand, we show that the degree of this instability is in some sense \emph{severe}. On the other hand, we demonstrate the stability degradation for very simple pairs of signals: they are smooth and also sparse in any time-frequency/time-scale localized representation. Therefore, classical regularization penalties become infeasible for phase retrieval.

\section{Analysis of the phase retrieval instability}\label{sec:lowerbound}

\subsection{A simple parameter-dependent couple}\label{subsec:couple}

First, let us introduce the translation operator $T_a$ on $L^2(\mathbb{R})$ defined as
\begin{equation}\label{eqn:translation}
T_a f(t):=f(t-a)
\end{equation}
for $f \in L^2(\mathbb{R})$ and $a \in \mathbb{R}$.

The goal is to construct two functions $f_a^+$, $f_a^- \in L^2(\mathbb{R})$, for which the Gabor transform measurements are close to each other in absolute value but such that $\|f_a^+-\tau f_a^-\|_{L^2(\R)}$ is not small for any phase factor $\tau \in S^1$. We proceed by defining the shifted Gaussian functions
\begin{equation}\label{eqn:gauss_shift}
	u_a := T_a \ph, \quad u_{-a}:= T_{-a} \ph,
\end{equation}
and fixing our pair of functions to be
\begin{equation}\label{eqn:f_a}
	f_a^+:= u_{-a} + u_a, \qquad f_a^-:= u_{-a}-u_a.
\end{equation}
For $a>0$ not too small, the Gabor transforms $V_\ph f_a^+$ and $V_\ph f_a^-$ are concentrated on two disconnected components. We note that 
$$
	f_a^+ - f_a^- = 2 u_a, \qquad f_a^+ + f_a^- = 2 u_{-a},
$$
and hence,
\begin{equation}\label{eqn:lower-bound}
	\inf_{\tau \in S^1} \| f_a^+ - \tau f_a^-\|_{L^2(\R)} = \min_{\tau \in \{ \pm 1\}} \| f_a^+ - \tau f_a^-\|_{L^2(\R)}= 2 \| \ph \|_{L^2(\R)} = 2^{3/4},
\end{equation}
where the first equality is due to $f_a^+$ and $f_a^-$ being real-valued. 

We remark that if we confine the parameter $a$ to a discrete set $\{a_k:= k \, q \mid k \in \mathbb{N}\}$, for some fixed $q>0$, then the spaces
$$
	\Hil_{k} := \mbox{span } \{ T_s \ph \mid s \in \{-k \, q, -(k-1) \, q, \dots, (k-1) \, q, k \, q \}\}
$$
are nested finite-dimensional subspaces of $L^2(\mathbb{R})$ and $f_{a_k}^+, f_{a_k}^- \in \Hil_k$. Hence, bounding 
\begin{equation}\label{eqn:fin-meas}
\| |V_\ph f_{a_k}^+|-|V_\ph f_{a_k}^-| \|_{L^2(\mathbb{R}^2)}
\end{equation}
from above yields a lower bound on the stability constant of the phase retrieval problem in the finite-dimensional subspace $\Hil_k$. As we show in the next section, the term in (\ref{eqn:fin-meas}) is bounded from above by an expression decaying exponentially in $a_k^2$. Therefore, the deterioration of stability of phase retrieval in $\Hil_k$ is such that the stability constant grows exponentially in $a_k^2$. 

In fact, we show a stronger result by upper bounding 
$$
	\| |V_\ph f_{a_k}^+|-|V_\ph f_{a_k}^-| \|_{W^{1,2}(\mathbb{R}^2)},
$$
where the Sobolev norm $\| \cdot \|_{W^{1,2}(\mathbb{R}^2)}$ of $F \in W^{1,2}(\mathbb{R}^2)$ is defined as 
$$
	\| F \|_{W^{1,2}(\mathbb{R}^2)} := \| F \|_{L^2(\mathbb{R}^2)} + \| \nabla F  \|_{L^2(\mathbb{R}^2)}.
$$
\begin{figure}[h!]
\centering
	\includegraphics[width = 0.5\textwidth]{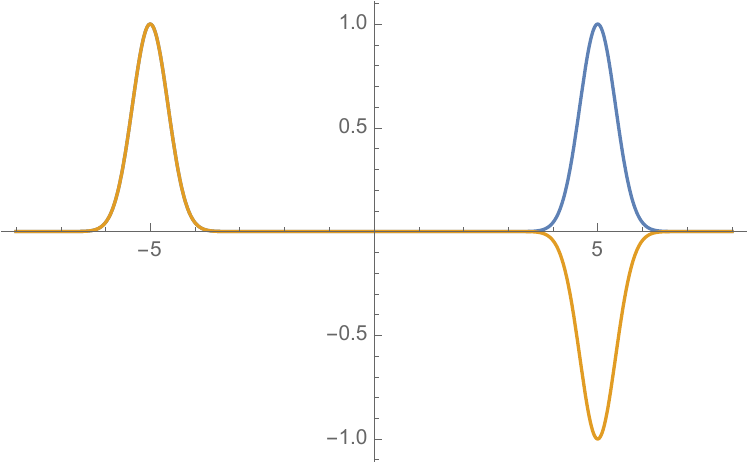}
	\caption{The functions $f_a^+$ (blue) and $f_a^-$ (orange) for $a=5$.}
\end{figure}
\begin{figure}[h!]
	\centering
	\begin{subfigure}[t]{.4\textwidth}
	\includegraphics[width = \textwidth]{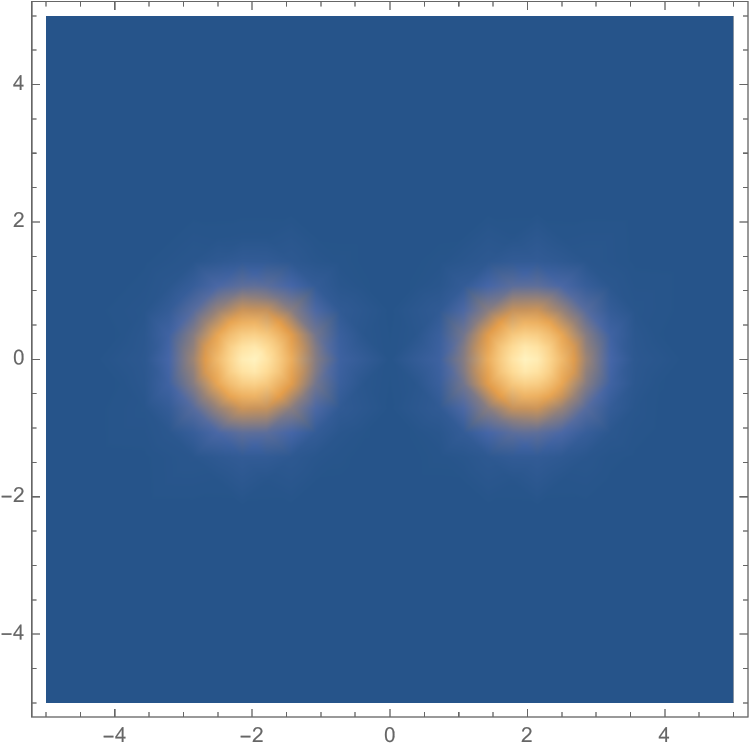}
	\caption{The modulus of the Gabor transform of $f_a^+$ for $a=2$.} \label{fig:V_f_plus}
	\end{subfigure}
	\hfill
	\begin{subfigure}[t]{.4\textwidth}
	\includegraphics[width = \textwidth]{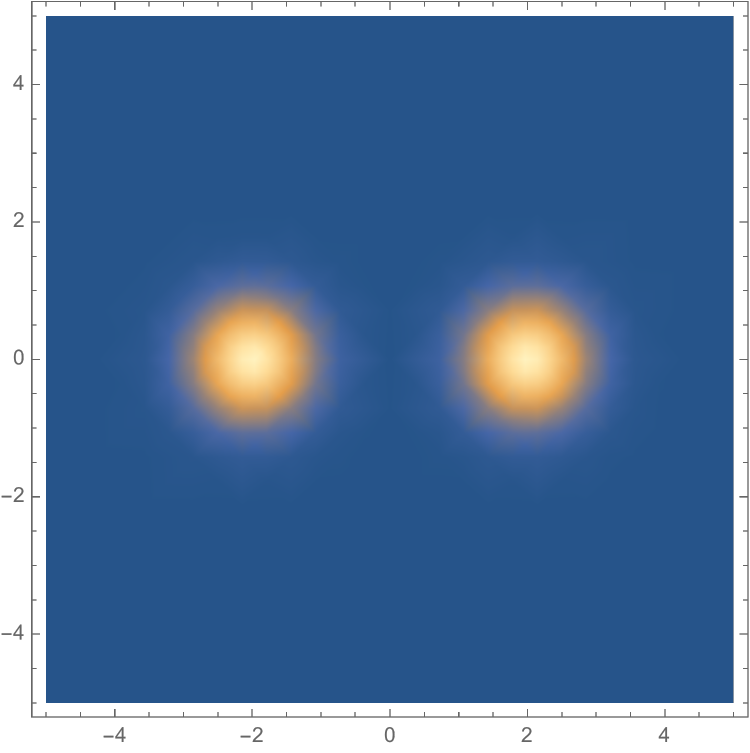}
	\caption{The modulus of the Gabor transform of $f_a^-$ for $a=2$.}\label{fig:V_f_minus}
	\end{subfigure}\\[1em]
	\begin{subfigure}[t]{.45\textwidth}
	\includegraphics[width = \textwidth]{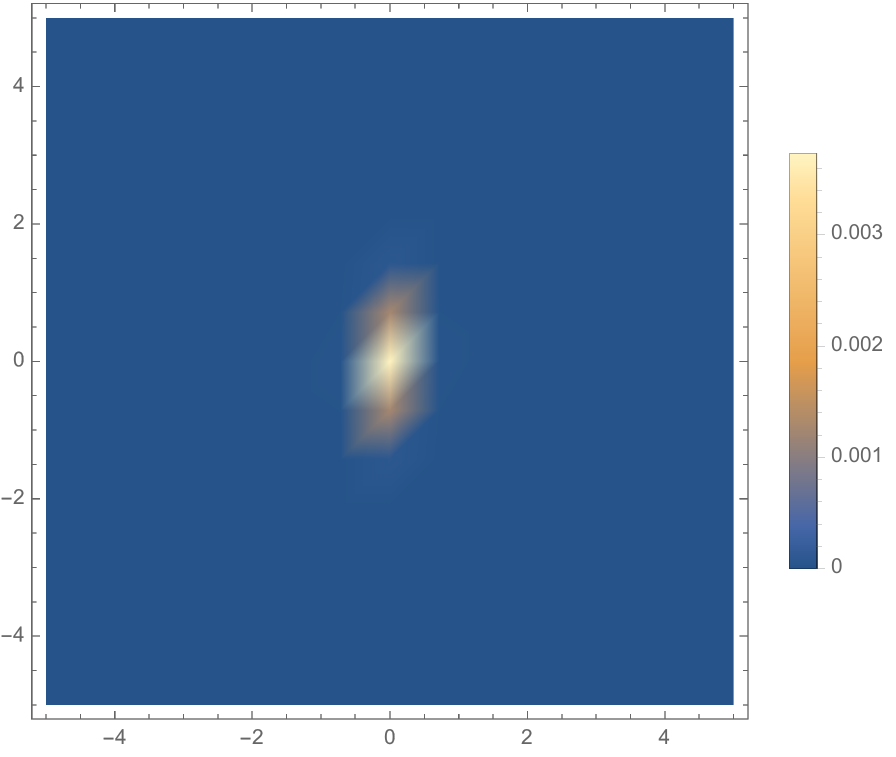}
	\caption{The difference between $|V_\ph f_2^+|$ and $|V_\ph f_2^-|$.}\label{fig:f_diff}
	\end{subfigure}
	\label{fig:f_a}
\end{figure}

\subsection{Severe ill-posedness}
In terms of stable reconstruction from the measurements we obtain that the $W^{1,2}$-distance between the measurements decays exponentially in $a^2$:
\begin{proposition}\label{prop:upper-bound}
There exists a uniform constant $C>0$ such that for all $a>0$ and for all $k \in (0, \pi/2)$,
\begin{equation}\label{eqn:upper-bound-gabor}
		\|  |V_\ph f_a^+|- |V_\ph f_a^-|  \|_{W^{1,2}(\mathbb{R}^2)}  
		\leq C \cdot e^{-k \cdot a^2}.
\end{equation}
\end{proposition}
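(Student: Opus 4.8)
The plan is to reduce everything to explicit Gaussian integrals, exploiting that $V_\ph f_a^+ = V_\ph u_{-a} + V_\ph u_a$ and $V_\ph f_a^- = V_\ph u_{-a} - V_\ph u_a$. Writing $A := V_\ph u_{-a}$ and $B := V_\ph u_a$, a direct evaluation of the Gaussian integral gives
$$V_\ph(T_s\ph)(x,y) = \tfrac{1}{\sqrt2}\,e^{-\pi(x-s)^2/2}\,e^{-\pi y^2/2}\,e^{-\pi \i (x+s)y},$$
so that $|A(x,y)| = \tfrac{1}{\sqrt2}e^{-\pi(x+a)^2/2}e^{-\pi y^2/2}$, $|B(x,y)| = \tfrac{1}{\sqrt2}e^{-\pi(x-a)^2/2}e^{-\pi y^2/2}$, and
$$\real(A\overline B)(x,y) = \tfrac12\,e^{-\pi a^2}\,e^{-\pi(x^2+y^2)}\cos(2\pi a y).$$
These three identities are the only inputs that ``see'' $a$, and the factor $e^{-\pi a^2}$ in the cross term is the source of the exponential decay.

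The heart of the estimate is a pointwise bound on $g := |V_\ph f_a^+| - |V_\ph f_a^-| = |A+B| - |A-B|$. I would write it as a difference of squares over a sum,
$$g = \frac{|A+B|^2 - |A-B|^2}{|A+B| + |A-B|} = \frac{4\,\real(A\overline B)}{|A+B| + |A-B|},$$
and bound the denominator below by the triangle inequality, $|A+B| + |A-B| \ge 2\max(|A|,|B|)$. Together with $|\real(A\overline B)| \le |A|\,|B|$ this yields the clean estimate
$$|g| \le 2\,\frac{|A|\,|B|}{\max(|A|,|B|)} = 2\min(|A|,|B|) = \sqrt2\, e^{-\pi(|x|+a)^2/2}\, e^{-\pi y^2/2},$$
i.e. the magnitude difference is controlled by the \emph{smaller} of the two Gabor transforms. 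Squaring and integrating, the $x$-integral is a Gaussian tail $\int_\R e^{-\pi(|x|+a)^2}\,dx \le e^{-\pi a^2}$ and the $y$-integral is $O(1)$, giving $\| g\|_{L^2(\Rtwo)} \lesssim e^{-\pi a^2/2}$ with no polynomial loss.

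For the Sobolev part I first observe that $g \in W^{1,2}(\Rtwo)$: each $|A\pm B|$ is the modulus of a smooth function, hence locally Lipschitz, with weak gradient bounded a.e.\ by $|\nabla(A\pm B)|$ and equal to the classical gradient off the measure-zero zero set of $A\pm B$. Away from those zeros I would differentiate $g = N/D$ with $N := 4\real(A\overline B)$ and $D := |A+B|+|A-B|$ by the quotient rule, $\nabla g = \nabla N / D - N\,\nabla D / D^2$, and estimate the two terms using again $D \ge 2\max(|A|,|B|)$ and the explicit Gaussian forms of $\nabla N$, $\nabla A$, $\nabla B$ (which bring down only algebraic factors $|x|+|y|+a$). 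The same cancellation recurs: combining $e^{-\pi a^2}$ from $N$ with $1/D \le \tfrac{1}{\sqrt2}e^{\pi y^2/2}e^{\pi\min((x+a)^2,(x-a)^2)/2}$ leaves $e^{-\pi(|x|+a)^2/2}$, so that $|\nabla g| \lesssim (|x|+|y|+a)\,e^{-\pi(|x|+a)^2/2}e^{-\pi y^2/2}$; integrating, the only surviving non-Gaussian factor is an overall $a$ (from the $a^2$ term against the Gaussian tail), whence $\|\nabla g\|_{L^2(\Rtwo)} \lesssim a\,e^{-\pi a^2/2}$.

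I expect the gradient estimate to be the main obstacle, and within it the transition region $x\approx 0$, where both $|A|$ and $|B|$ are of size $e^{-\pi a^2/2}$, where $D$ attains its smallest values, and where $|A\pm B|$ fail to be smooth (they vanish on $x=0$ at the $y$ with $\cos(\pi a y)=0$, resp.\ $\sin(\pi a y)=0$). The key point to verify is that there the smallness of $D$ is exactly matched by that of $N$ and $\nabla D$, so that $N\nabla D/D^2$ stays bounded by $(\text{algebraic})\cdot e^{-\pi a^2/2}$ rather than blowing up; the explicit Gaussian formulas make this transparent, and the cone singularities sit on a null set and so do not affect the $L^2$-integral. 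Combining the two norms gives $\| g\|_{W^{1,2}(\Rtwo)} \lesssim (1+a)\,e^{-\pi a^2/2} = (1+a)\,e^{-(\pi/2)a^2}$, and since $(1+a)\,e^{-(\pi/2-k)a^2}$ is bounded on $(0,\infty)$ for every $k<\pi/2$, the factor $e^{-(\pi/2-k)a^2}$ absorbs the linear term and yields $\le C\,e^{-k a^2}$, which is the claim.
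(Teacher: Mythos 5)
Your argument is correct, and although it lands on the same pointwise Gaussian bounds as the paper, it reaches them by a genuinely different mechanism. Writing $g=|A+B|-|A-B|$ as $4\real(A\overline{B})/(|A+B|+|A-B|)$ and bounding the denominator below by $2\max(|A|,|B|)$ gives $|g|\le 2\min(|A|,|B|)$ in one line; the paper instead derives the equivalent pair of estimates (\ref{eqn:pw1})--(\ref{eqn:pw2}) by factoring out a common Gaussian and applying the reverse triangle inequality, and then must switch between the two bounds according to the sign of $x$ when integrating in polar coordinates over $B_{2a}(0)$ and its complement. Your identity isolates the cross term $\real(A\overline{B})=\tfrac12 e^{-\pi a^2}e^{-\pi(x^2+y^2)}\cos(2\pi a y)$ as the single source of the exponential decay, and the direct Cartesian integration of $e^{-\pi(|x|+a)^2}$ even avoids the polynomial factor $\sqrt{1+2\pi a^2}$ appearing in Lemma \ref{lem:normbd} (immaterial for the final statement, but cleaner). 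For the gradient, the paper differentiates the factored modulus $e^{-\frac{\pi}{2}((x+a)^2+y^2)}\eta_a^\pm$ term by term, whereas you apply the quotient rule to $N/D$; both routes yield $|\nabla g|\lesssim(a+|x|+|y|)\,e^{-\frac{\pi}{2}((|x|+a)^2+y^2)}$, and your control of $N\nabla D/D^2$ via $|N|/D^2\le\min(|A|,|B|)/\max(|A|,|B|)$ together with $|\nabla A|+|\nabla B|\lesssim(a+|x|+|y|)(|A|+|B|)$ checks out. You are in fact more careful than the paper on one point: $|A\pm B|$ fail to be differentiable precisely on the discrete (hence null) zero sets $\{x=0,\ \cos(\pi a y)=0\}$ and $\{x=0,\ \sin(\pi a y)=0\}$, and you correctly note that local Lipschitz continuity plus an a.e.\ gradient bound suffices for the $W^{1,2}$ estimate, a point the paper's formal differentiation glosses over. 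One small caveat, which your proof shares with the paper's own lemmas: the final absorption $(1+a)e^{-\pi a^2/2}\le C e^{-k a^2}$ forces $C$ to depend on $k$ (the required constant blows up as $k\to\pi/2$), so the proposition should be read as ``for every $k\in(0,\pi/2)$ there exists $C=C(k)$.''
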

\begin{proof}
Combining Lemma \ref{lem:normbd}, Lemma \ref{lem:normbd-dx} and Lemma \ref{lem:normbd-dy} yields the result.
\end{proof}

Together with (\ref{eqn:lower-bound}), this implies that for a stability estimate to hold, the stability constant scales at least exponentially in $a^2$:
\begin{corollary}\label{cor:stab-gabor-ex}
There exists a uniform constant $\widetilde{C}>0$ such that for all $a>0$ and for all $k \in (0, \pi/2)$,
\begin{equation}\label{eqn:lbd1}
\min_{\tau \in \{\pm1\}} \| f_a^+ - \tau f_a^-\|_{L^2(\R)} \geq \widetilde{C} e^{k \cdot a^2} \|  |V_\ph f_a^+|- |V_\ph f_a^-|  \|_{W^{1,2}(\mathbb{R}^2)}.
\end{equation}
\end{corollary}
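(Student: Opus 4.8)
The plan is to derive the corollary directly from Proposition \ref{prop:upper-bound} together with the constant lower bound recorded in (\ref{eqn:lower-bound}). Since the corollary is essentially a rearrangement of the preceding proposition, the argument is short; the substantive analytic work lies entirely in Proposition \ref{prop:upper-bound} and its three supporting lemmas. The underlying idea is that the left-hand side of (\ref{eqn:lbd1}) is a fixed positive number, independent of $a$, while the measurement distance on the right decays like $e^{-k a^2}$, so that the product $e^{k a^2}$ times the measurement distance stays uniformly bounded.

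First I would recall from (\ref{eqn:lower-bound}) that, because $f_a^+$ and $f_a^-$ are real-valued, the signal-domain distance is in fact independent of $a$, namely $\min_{\tau \in \{\pm 1\}} \| f_a^+ - \tau f_a^- \|_{L^2(\R)} = 2^{3/4}$. Next I would take the estimate of Proposition \ref{prop:upper-bound} and multiply both sides by $e^{k a^2}$, which gives $e^{k a^2}\,\| |V_\ph f_a^+| - |V_\ph f_a^-| \|_{W^{1,2}(\R^2)} \leq C$. This step decouples the measurement distance from the parameter: the weighted measurement distance is uniformly bounded by the same constant $C$ for every $a > 0$ and every $k \in (0, \pi/2)$.

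Finally I would set $\widetilde{C} := 2^{3/4}/C$ and combine the two displays, obtaining $\widetilde{C}\, e^{k a^2}\, \| |V_\ph f_a^+| - |V_\ph f_a^-| \|_{W^{1,2}(\R^2)} \leq \widetilde{C}\cdot C = 2^{3/4} = \min_{\tau \in \{\pm 1\}} \| f_a^+ - \tau f_a^- \|_{L^2(\R)}$, which is precisely (\ref{eqn:lbd1}). Because the constant $C$ from Proposition \ref{prop:upper-bound} is uniform over both $a > 0$ and $k \in (0, \pi/2)$, the resulting $\widetilde{C}$ inherits this uniformity, as required.

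In this formulation there is no real obstacle at the level of the corollary itself; the only care needed is to confirm that $C$ (and hence $\widetilde{C}$) does not secretly depend on $a$ or $k$, which is guaranteed by the statement of Proposition \ref{prop:upper-bound}. The genuine difficulty is pushed upstream into that proposition, where one must prove the exponential decay of the full $W^{1,2}$-distance between the two Gabor transform magnitudes; I expect the hard part to be controlling the $L^2$-norms of the two partial derivatives in Lemma \ref{lem:normbd-dx} and Lemma \ref{lem:normbd-dy}, rather than the $L^2$-norm of the difference itself in Lemma \ref{lem:normbd}.
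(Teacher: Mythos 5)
Your proposal is correct and follows essentially the same route as the paper: the paper likewise notes that (\ref{eqn:lower-bound}) gives a constant lower bound on the signal distance (it simply uses $2^{3/4}\geq 1$) and then applies Proposition \ref{prop:upper-bound} to absorb the exponential factor into a uniform constant. Your explicit choice $\widetilde{C}=2^{3/4}/C$ and the remark that all the analytic work lives in the proposition and its supporting lemmas are both accurate.
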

\begin{proof}
Equation (\ref{eqn:lower-bound}) implies
$$
\inf_{\tau \in S^1} \| f_a^+ - \tau f_a^-\|_{L^2(\R)} \geq 1.
$$
Applying Proposition \ref{prop:upper-bound} yields (\ref{eqn:lbd1}). 
\end{proof}

\begin{remark}\label{rem:main}
In inverse problems, the ill-posedness of a linear operator equation can be quantified if the operator (or its normal form) has a discrete spectrum: instabilities are caused by accumulation of the eigen- (or singular) values of the operator at zero. Depending on the decay rate one typically speaks of \emph{mild} (polynomial rate) or \emph{severe} ill-posedness (exponential rate). In a similar way, the above result quantifies that the non-linear inverse problem of Gabor phase retrieval is also \emph{severely} ill-posed: the stability degrades at a rate that is at least exponential in the parameter $a$ that describes the moving apart of two Gaussians. 
\end{remark}

\begin{remark}
As mentioned in Section \ref{sec:intro}, a stability estimate for phase retrieval can be shown for the infinite-dimensional setting, if the notion of phase reconstruction is relaxed. Instead of recovering up to one global phase factor, so-called \emph{atoll domains} are introduced. These consist of $n$ bounded regions $D_j \subset \mathbb{C}$, $j=1,\dots,n$,  that are pairwise disjoint. One then seeks to reconstruct the Gabor transform up to a phase factor $\tau_j$ on each component $D_j$, allowing for different phase factors. We note that with this concept, our example presented in this section can be stably recovered. We refer to \cite{alaifari2016stable} for an introduction to atoll functions and stable phase retrieval and to \cite{grohs2017stable} for significantly improved stability constants.
\end{remark}

\subsection{Regularization penalties}\label{subsec:regularization}

Given the inherent nature of instability of the phase retrieval problem, the question of regularizing this inverse problem becomes inevitable. Regularization typically consists of adding a \emph{regularizing term} to the minimization functional, that amounts to a prior on the solution. Given some data $u$, instead of searching for the least-squares solution
$$
f_{\mbox{lsq}} := \argmin_{f \in L^2(\mathbb{R}) / S^1} \| \Acal_\ph (f) - u \|^2_{L^2(\mathbb{R}^2)}
$$
one would find the minimizer of 
\begin{equation}\label{eqn:reg}
\| \Acal_\ph (f) - u \|_{L^2(\mathbb{R}^2)}^2 + \tau \vertiii{f},
\end{equation}
where $\tau$ is some \emph{regularization parameter} and the term $ \vertiii{f}$ is a \emph{regularization penalty} that carries a-priori information that one might have on the solution. For example, if one knows that the solution has its $L^2-$norm bounded by $b>0$, $\|f\|_{L^2(\mathbb{R})}\leq b$, then, a solution with small $L^2-$norm can be enforced by adding the penalty $\|f\|_{L^2(\mathbb{R})}^2$.

Typical choices for regularization penalties on functions $f \in L^2(\mathbb{R})$ include terms $\vertiii{\cdot}$ that favour smooth functions or functions with small bounded variation. It is clear, however, that such regularization penalties would not improve the stability properties of the phase retrieval problem. The example constructed in Section \ref{subsec:couple} consists of functions $f_a^+$, $f_a^-$, that are sums of two translated Gaussians, so that they enjoy infinite regularity and have small variation. A regularization penalty favouring such properties would not stably discriminate between $f_a^+$ and $f_a^-$.

\subsubsection{Sparsity promoting penalties}\label{sec:sparse_reg}

Another family of regularization penalties consists of so-called \emph{sparsity promoting} terms. The underlying idea is that the a priori knowledge on the solution $f \in L^2(\mathbb{R})$ can be expressed through its representation in a \emph{frame} of $L^2(\mathbb{R})$. For a Hilbert space $\mathcal{H}$, a frame is a family $\{ \psi_\lambda \}_{\lambda \in \Lambda} \subset \mathcal{H}$, where the index set $\Lambda$ is discrete and for which there exist bounds $A, B>0$, such that 
\begin{equation}\label{eqn:frame}
A \|f\|_{\mathcal{H}}^2 \leq \sum_{\lambda \in \Lambda} |\langle f, \psi_\lambda \rangle|^2 \leq B \|f\|_{\mathcal{H}}^2 \quad \mbox{ for all } f \in \mathcal{H}.
\end{equation}
Given a frame $\{\psi_\lambda\}_{\lambda \in \Lambda}$ of $L^2(\mathbb{R})$, one could choose a regularization term for (\ref{eqn:reg}), that is a weighted $\ell^p-$norm of the frame coefficients, i.e. $\vertiii{f} = \| \{ \langle f, \psi_\lambda \rangle \}_{\lambda \in \Lambda} \|_{\ell^p_w}^p$. Choices of $p$ in $[1,2)$ would then favour functions $f$ with rapidly decaying frame coefficients \cite{daubechies2004iterative}. 

We consider the most prominent examples for time-frequency localized representations of signals: frames of wavelets and frames constituted by short-time Fourier transforms (STFTs). 

Given a \emph{mother wavelet} $\psi \in L^2(\mathbb{R}),$ a \emph{scaling function} $\chi \in L^2(\mathbb{R})$ and  parameters $\alpha>1, \beta>0$, a wavelet system can be introduced as
\begin{equation}\label{eqn:wavelet-sys}
\{ \chi_{0,k}:= \chi(\cdot-\beta k) \}_{k \in \mathbb{Z}} \cup \{ \psi_{j,k}(t) := \alpha^{j/2} \psi(\alpha^j t - \beta k)\}_{j \in \mathbb{N}_0, k \in \mathbb{Z}}.
\end{equation}
In this case, the (wavelet) frame condition (\ref{eqn:frame}) becomes
$$
A \|f\|_{L^2(\mathbb{R})}^2 \leq \sum_{k \in \mathbb{Z}} |\langle f, \chi_{0,k} \rangle|^2 + \sum_{j \in \mathbb{N}_0, k \in \mathbb{Z}} |\langle f, \psi_{j,k} \rangle|^2\leq B \|f\|_{L^2(\mathbb{R})}^2 \quad \mbox{ for all } f \in L^2(\mathbb{R}). 
$$
For wavelet frames, the sparsity promoting regularization penalty is typically chosen as the following weighted $\ell^p-$norm of the coefficients:
\begin{equation}\label{eqn:wavelet-penalty}
\vertiii{f} := \| f \|_{s,p}^p := \sum_{k \in \mathbb{Z}} |\langle f, \chi_{0,k} \rangle|^p + \sum_{j \in \mathbb{N}_0} \alpha^{j \sigma p} \sum_{k \in \mathbb{Z}} | \langle f, \psi_{j,k} \rangle|^p,
\end{equation}
where $\sigma = s+1/2-1/p$ and $p \in [1,2)$. If $\chi, \psi \in C^M(\mathbb{R})$ for $M>s$, then $\| \cdot \|_{s,p}$ is a norm equivalent to the \emph{Besov space} norm $\| \cdot \|_{B_{p,p}^s}$. The parameter $s$ indicates the smoothness of the function class $B_{p,p}^s$. Roughly speaking, the Besov space $B_{p,p}^s$ consists of functions that have $s$ derivatives in $L^p$.

Similarly, we will consider frames of short-time Fourier transforms. Given a window $g$ and parameters $x_0, y_0>0$, the system defined as $\{g_{n,k}(t) := e^{2 \pi i k y_0 t} g(t-n x_0)\}_{n,k \in \mathbb{Z}}$ is an STFT frame of $L^2(\mathbb{R})$ if there exist constants $A,B>0$ such that
$$
A \|f\|_{L^2(\mathbb{R})}^2 \leq \sum_{n,k \in \mathbb{Z}} |\langle f, g_{n,k} \rangle|^2 \leq B  \|f\|_{L^2(\mathbb{R})}^2 \quad \mbox{ for all } f \in L^2(\mathbb{R}).
$$
A sparsity promoting penalty in this setting would be of the form
\begin{equation}\label{eqn:stft-penalty}
\vertiii{f} := \| \{ \langle f, g_{n,k} \rangle \} \|_{\ell_w^{p}}^p:= \sum_{k,n \in \mathbb{Z}} |\langle f, g_{n,k} \rangle |^p w(n x_0, k y_0)^p,
\end{equation}
where for some $s \geq 0$, the weight is defined as $w(x,y):=\big((1+|x|)(1+ |y|)\big)^s$ or an equivalent weight of polynomial type.

We now want to show that 
\begin{enumerate}
\item  \label{item1} the functions $f_a^+$, $f_a^-$ can be sparsely approximated in wavelet frames or STFT frames, i.e. $$\vertiii{f_a^+}, \vertiii{f_a^-} < \infty, \mbox{ and}$$
\item $\Big|\vertiii{f_a^+}- \vertiii{f_a^-}\Big| \leq C_L a^{-L}$ for some $L \geq 1$ and a constant $C_L.$
\end{enumerate}
The second property is important to show that a regularization scheme as in (\ref{eqn:reg}) can not stably distinguish $f_a^+$ from $f_a^-$ for any of the considered sparsity promoting penalties. For item \ref{item1}, we note that since $f_a^+$, $f_a^-$ are linear combinations of two translated Gaussians, we only need to state that $\vertiii{\ph} < \infty$ for $\ph(t) = e^{- \pi t^2}$ and the norms defined as in (\ref{eqn:wavelet-penalty}) and (\ref{eqn:stft-penalty}). This is of course not surprising (under mild assumptions on the window or on the mother wavelet and scaling function), due to the nice decay properties of $\ph$ in both time and frequency.

More precisely, for the case of an expansion in an STFT frame with a window $g$ in the Schwartz space $\mathcal{S}(\mathbb{R})\backslash \{0\}$, we have the following
\begin{proposition}\label{prop:finite_lp_stft}
Let $g \in \mathcal{S}(\mathbb{R}) \backslash \{0\}$, $s \geq 0$ and $1 \leq p \leq \infty$. Then,
$$
 \| \{ \langle \ph, g_{n,k} \rangle \} \|_{\ell_w^{p}} < \infty.
$$
\end{proposition}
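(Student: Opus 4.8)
The plan is to recognize the frame coefficients as samples of a short-time Fourier transform and to exploit that the STFT of a Schwartz function against a Schwartz window is again Schwartz, hence decays faster than any polynomial. First I would write out the inner product,
$$
\langle \ph, g_{n,k}\rangle = \int_\R \ph(t)\,\overline{g(t-nx_0)}\,e^{-2\pi i k y_0 t}\,dt,
$$
and observe that $|\langle \ph, g_{n,k}\rangle|$ equals $|V_g\ph(nx_0,ky_0)|$, the modulus of the STFT of $\ph$ with window $g$ evaluated at the lattice point $(nx_0, ky_0)$. Since the Gaussian $\ph$ lies in $\mathcal{S}(\R)$ and $g\in\mathcal{S}(\R)$ by hypothesis, the key fact I would invoke is $V_g\ph \in \mathcal{S}(\R^2)$: for every $N\in\N$ there is a constant $C_N>0$ with $|V_g\ph(x,y)| \le C_N\big((1+|x|)(1+|y|)\big)^{-N}$ for all $(x,y)\in\R^2$.

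If I wanted to avoid quoting this as a black box, I would derive the two directions of decay separately. For fixed $n$, the map $t\mapsto \ph(t)\,\overline{g(t-nx_0)}$ is a product of Schwartz functions, hence Schwartz, and the integral above is precisely its Fourier transform evaluated at $ky_0$; since the Fourier transform preserves $\mathcal{S}(\R)$, this yields arbitrary polynomial decay in the frequency index $k$. Decay in the time index $n$ follows because the overlap $\ph(t)\,\overline{g(t-nx_0)}$ is forced to be rapidly small in $nx_0$ by the rapid decay of both $\ph$ and $g$. Combining the two directions produces the joint estimate above.

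With that estimate in hand, the remainder is a routine convergence check. For $1\le p<\infty$ I would bound
$$
\| \{ \langle \ph, g_{n,k} \rangle \} \|_{\ell_w^{p}}^p = \sum_{n,k\in\Z}|V_g\ph(nx_0,ky_0)|^p\big((1+|nx_0|)(1+|ky_0|)\big)^{sp} \le C_N^p \sum_{n,k\in\Z}\big((1+|nx_0|)(1+|ky_0|)\big)^{(s-N)p},
$$
where the lattice sum factors into two one-dimensional sums, each convergent once $(N-s)p>1$; choosing $N>s+1$ covers all $p\in[1,\infty)$ at once. For $p=\infty$ the same decay estimate gives $\sup_{n,k}|V_g\ph(nx_0,ky_0)|\big((1+|nx_0|)(1+|ky_0|)\big)^s<\infty$, since the rapid decay dominates the fixed polynomial weight.

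The only substantive ingredient is the Schwartz property $V_g\ph\in\mathcal{S}(\R^2)$, and I expect this to be the main (indeed essentially the only) obstacle, in the sense that everything afterward is elementary summation. The delicate point is obtaining decay \emph{simultaneously} in the time and frequency indices from the Schwartz hypotheses — frequency decay via the Fourier transform of a Schwartz product, time decay via the vanishing overlap of two rapidly decaying bumps — after which the weight $w$ is absorbed harmlessly.
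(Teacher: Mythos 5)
Your proof is correct and rests on the same key ingredient as the paper's, namely that $V_g\ph\in\mathcal{S}(\mathbb{R}^2)$ whenever $\ph$ and $g$ are both Schwartz, after which the weight $w$ is harmless. The only difference is the final step: the paper bounds the discrete weighted $\ell^p_w$ norm by the continuous weighted norm $\|V_g\ph\|_{L_w^{p,p}}$ via a classical sampling inequality from Gr\"ochenig, whereas you sum over the lattice directly from the rapid-decay estimate --- a more elementary, self-contained finish that reaches the same conclusion.
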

\begin{proof}
Since $\ph, g \in \mathcal{S}(\mathbb{R})$, $V_g \ph \in \mathcal{S}(\mathbb{R}^2)$ \cite[Theorem 11.2.5]{grochenig}. Thus, 
$$\|V_g \ph\|_{L_w^{p,p}} := \Big(\iint |V_g \ph(x,y)|^p w(x,y)^p dxdy\Big)^{1/p} < \infty.$$ On the other hand, classical results \cite{grochenig} guarantee
$$
\| \{ \langle \ph, g_{n,k} \rangle \} \|_{\ell_w^{p}} \leq C \cdot \|V_g \ph\|_{L_w^{p,p}}
$$
where the constant $C$ depends on $g, x_0, y_0$ and $s$.
\end{proof}

\begin{proposition}\label{prop:small_lp_stft}
Let $g \in \mathcal{S}(\mathbb{R}) \backslash \{0\}$ and $\vertiii{\cdot}:= \| \{ \langle \cdot, g_{n,k} \rangle \}\|_{\ell_w^p}^p$ with $s \geq 0$ and $p \in [1,2]$. Then, for every $m \in \mathbb{N}$, $m > sp+1$, there exists a constant $C_{m,x_0,y_0}>0$ that depends on only $m, x_0$ and $y_0$, s.t.
$$
\Big|\vertiii{f_a^+}- \vertiii{f_a^-}\Big| \leq C_{m,x_0,y_0} \big(1 + a\big)^{s p - m +1}.
$$
\end{proposition}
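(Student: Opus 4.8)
The plan is to reduce the difference of penalties to a single sum over the time--frequency lattice, bound its summand by a quantity that is essentially the \emph{product} of the two shifted Gaussian STFTs (so that it is automatically small wherever the two time--frequency profiles are far apart), and then extract the decay in $a$ from an elementary summation estimate. First, writing $A_{n,k} := \langle u_{-a}, g_{n,k}\rangle$ and $B_{n,k}:=\langle u_a, g_{n,k}\rangle$, relation \eqref{eqn:f_a} gives $\langle f_a^+, g_{n,k}\rangle = A_{n,k}+B_{n,k}$ and $\langle f_a^-, g_{n,k}\rangle = A_{n,k}-B_{n,k}$, so that
\[
\vertiii{f_a^+}-\vertiii{f_a^-} = \sum_{n,k\in\Z}\big(|A_{n,k}+B_{n,k}|^p - |A_{n,k}-B_{n,k}|^p\big)\,w(nx_0,ky_0)^p.
\]
By the covariance of the STFT under translations we have $|A_{n,k}| = |V_g\ph(nx_0+a,ky_0)|$ and $|B_{n,k}| = |V_g\ph(nx_0-a,ky_0)|$, and by Proposition \ref{prop:finite_lp_stft}, $V_g\ph\in\mathcal{S}(\R^2)$; in particular $V_g\ph$ is bounded and, for every $M$, $|V_g\ph(x,y)|\le C_M(1+|x|)^{-M}(1+|y|)^{-M}$.

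The nonlinear heart of the argument is a pointwise estimate of the summand. Applying the mean value theorem to $t\mapsto t^p$ yields, for $z,w\in\C$ and $p\ge1$, the inequality $\big||z|^p-|w|^p\big|\le p\,\max(|z|,|w|)^{p-1}\,|z-w|$; choosing $z=A_{n,k}+B_{n,k}$, $w=A_{n,k}-B_{n,k}$ and using that the left-hand side is symmetric under $A_{n,k}\leftrightarrow B_{n,k}$ gives
\[
\big||A_{n,k}+B_{n,k}|^p-|A_{n,k}-B_{n,k}|^p\big|\le 2p\,(|A_{n,k}|+|B_{n,k}|)^{p-1}\min(|A_{n,k}|,|B_{n,k}|).
\]
Since $V_g\ph$ is bounded, the factor $(|A_{n,k}|+|B_{n,k}|)^{p-1}$ is dominated by a constant $K_0$ uniformly in $p\in[1,2]$, and $\min(|A_{n,k}|,|B_{n,k}|)\le |A_{n,k}|^{1/2}|B_{n,k}|^{1/2}$; hence the summand is controlled by $2pK_0\,|V_g\ph(nx_0+a,ky_0)|^{1/2}|V_g\ph(nx_0-a,ky_0)|^{1/2}$.

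Finally I would insert the Schwartz decay of order $M$ and separate the $n$-- and $k$--sums. The $k$--sum $\sum_{k\in\Z}(1+|ky_0|)^{sp-M}$ is a finite constant once $M>sp+1$ and contributes nothing in $a$. For the $n$--sum I transfer the weight onto the shifted points using $1+|u|\le(1+a)(1+|u\pm a|)$, giving $(1+|nx_0|)^{sp}\le(1+a)^{sp}\big[(1+|nx_0+a|)(1+|nx_0-a|)\big]^{sp/2}$, and then use the elementary bound $(1+|nx_0+a|)(1+|nx_0-a|)\ge 1+2a$ to peel off a factor $(1+2a)^{-(m-1)}$ while leaving a shifted sum $\sum_{n\in\Z}(1+|nx_0+a|)^{-q_2}$ that converges to a constant uniformly in $a$ for $q_2>1$. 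Choosing the Schwartz order $M$ large enough (depending on $m$) so that $M/2-sp/2=(m-1)+q_2$, the whole expression is bounded by $C_{m,x_0,y_0}(1+a)^{sp-m+1}$; the hypothesis $m>sp+1$ is exactly what makes this exponent negative, i.e.\ a genuine decay.

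I expect the main obstacle to be two-fold: producing the pointwise estimate in a form that retains the product structure $\min(|A_{n,k}|,|B_{n,k}|)\le\sqrt{|A_{n,k}||B_{n,k}|}$ \emph{uniformly} over $p\in[1,2]$ (a naive splitting such as $|A|^{p-1}|B|+|B|^{p-1}|A|$ degenerates as $p\to1$ and loses the decay), and carrying out the weight-transfer and summation estimate so that precisely the power $(1+a)^{sp-m+1}$ emerges rather than a lossier exponent.
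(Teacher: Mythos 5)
Your argument is correct and arrives at the same exponent, but it implements the two key steps differently from the paper. For the nonlinear step, the paper normalizes $\|g\|_{L^2(\mathbb{R})}\le 1$ and uses the elementary bound $\big||z|^p-|w|^p\big|\le 2\,\big||z|-|w|\big|$ for $p\in[1,2]$, followed by two reverse triangle inequalities to reach $4\min\{|\langle u_{-a},g_{n,k}\rangle|,|\langle u_a,g_{n,k}\rangle|\}$; your mean-value-theorem bound $2p\,(|A_{n,k}|+|B_{n,k}|)^{p-1}\min(|A_{n,k}|,|B_{n,k}|)$ with the prefactor absorbed into a constant achieves the same reduction and is equally valid (and, as you note, uniform in $p\in[1,2]$). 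The genuine divergence is in the summation: the paper splits the lattice at $n=0$, using $|\langle u_{-a},g_{n,k}\rangle|$ for $n\ge0$ and $|\langle u_{a},g_{n,k}\rangle|$ for $n<0$, and exploits $1+|nx_0|\le 1+|nx_0\pm a|$ on the respective half-lattices before comparing the sum with an integral; you instead keep the symmetric product $\sqrt{|A_{n,k}||B_{n,k}|}$, transfer the weight via $1+|u|\le(1+a)(1+|u\pm a|)$, and peel off the decay from $(1+|nx_0+a|)(1+|nx_0-a|)\ge 1+2a$. Your version avoids the case split and makes the localization mechanism explicit (the summand is small because the two time-frequency profiles never overlap), at the mild cost of invoking the Schwartz decay at order roughly $2m+sp$ rather than $m$, which is harmless since $V_g\ph\in\mathcal{S}(\mathbb{R}^2)$ supplies every polynomial order. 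Both routes produce a constant depending on $m$, $x_0$, $y_0$ (and, as in the paper, implicitly on $g$, $s$ and $p$), and both give exactly the claimed rate $(1+a)^{sp-m+1}$.
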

\begin{proof}
See Section \ref{sec:proofs_sparse_reg}.
\end{proof}
For the case of wavelet frames we employ standard results (see e.g. \cite{mallat1999wavelet}) and adapt them for our case to obtain specific decay rates for the wavelet coefficients of $\ph$. If $\psi$ has $m \in \mathbb{N}$ vanishing moments, i.e.
$$
\int_{\mathbb{R}} x^\ell \psi(x) dx = 0 \quad \mbox{ for all } \ell \in \{0, \dots, m-1 \},
$$
 and both $\psi$ and $\chi$ have sufficient spatial decay, then the wavelet and scaling coefficients of $\ph(t)=e^{-\pi t^2}$ decay at the order $(\alpha^j \beta |k|)^{-m-1}$:

\begin{proposition}\label{prop:wavelet-decay}
Let $\psi \in L^2(\mathbb{R})$ have $m \in \mathbb{N}$ vanishing moments and suppose that there exist constants $C_{2 m+2}$, $\widetilde{C}_{m+2}$ such that
\begin{align}
|\psi(x)| &\leq \frac{C_{2m+2}}{1+|x|^{2 m+2}} \mbox{ for all } x \in \mathbb{R},\label{eqn:wavelet-decay}\\
|\chi(x)| &\leq \frac{\widetilde{C}_{m+2}}{1+|x|^{m+2}} \mbox{ for all } x \in \mathbb{R}. \label{eqn:scaling-function-decay}
\end{align}
Then, there exists a constant $C>0$ depending on only $m$, such that
\begin{align}
|\langle \ph, \psi_{j,k} \rangle| &\leq C \, \alpha^{-j(m+3/2)} (\beta |k|)^{-m-1}, \label{eqn:wav-coeff-decay} \\
|\langle \ph, \chi_{0,k} \rangle| &\leq C \, (\beta |k|)^{-m-1}. \label{eqn:scaling-coeff-decay}
\end{align}
\end{proposition}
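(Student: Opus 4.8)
The plan is to treat the two coefficient families separately, extracting the scale decay (the $\alpha$-power) from the $m$ vanishing moments of $\psi$ and the spatial decay (the $k$-power) from the polynomial decay hypotheses (\ref{eqn:wavelet-decay}) and (\ref{eqn:scaling-function-decay}), following the classical wavelet coefficient estimates of \cite{mallat1999wavelet} adapted to the Gaussian $\ph$. For the wavelet coefficients (\ref{eqn:wav-coeff-decay}), I would first change variables via $w = \alpha^j t - \beta k$ to write
$$
\langle \ph, \psi_{j,k} \rangle = \alpha^{-j/2} \int_\R \overline{\psi(w)}\, e^{-\pi \alpha^{-2j}(w+\beta k)^2}\, dw ,
$$
so that $\overline{\psi}$ is paired against the dilated Gaussian $G(w) := e^{-\pi \alpha^{-2j}(w+\beta k)^2}$, which is smooth with all derivatives controlled by powers of $\alpha^{-j}$. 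Since $\psi$ has $m$ vanishing moments, $\int_\R \overline{\psi(w)}\, p(w)\, dw = 0$ for every polynomial $p$ of degree at most $m-1$; I would exploit this by subtracting from $G$ its degree-$(m-1)$ Taylor polynomial $T_{m-1}$ about $w=0$ and estimating the remainder pointwise by $|G(w) - T_{m-1}(w)| \le |w|^m\, \|G^{(m)}\|_\infty / m!$. Because
$$
G^{(m)}(w) = \alpha^{-jm}\, H_m\big(\alpha^{-j}(w+\beta k)\big)\, e^{-\pi \alpha^{-2j}(w+\beta k)^2}
$$
with $H_m$ a fixed degree-$m$ (Hermite-type) polynomial, the $m$ derivatives produce a scale factor $\alpha^{-jm}$ which, combined with the normalisation $\alpha^{-j/2}$ and the convergent integral $\int_\R |w|^m |\psi(w)|\, dw$, yields the scale decay; the precise exponent in (\ref{eqn:wav-coeff-decay}) emerges only once this is combined with the spatial split below.

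The spatial decay is then harvested from the tail hypothesis (\ref{eqn:wavelet-decay}): I would split the $w$-integral into the inner region $\{|w| \le \beta|k|/2\}$, on which $|w+\beta k| \ge \beta|k|/2$ forces the Gaussian factor in $G^{(m)}$ to be small, and the outer region $\{|w| > \beta|k|/2\}$, on which $|\psi(w)| \lesssim (1+|w|)^{-(2m+2)}$ gives, after pairing against $|w|^m$, a tail integral of order $(\beta|k|)^{-(m+1)}$. The $k=0$ term is then covered separately by the uniform estimate that results when the Taylor remainder is bounded directly without the split. I expect the genuine obstacle here to be the simultaneous extraction of both the scale factor and the polynomial $k$-decay into a single product bound with the claimed exponents: the two estimates pull in opposite directions — the vanishing-moment argument is sharpest when the wavelet overlaps the bulk of $\ph$ (small $\beta|k|/\alpha^j$), while the tail argument is sharpest when it does not — so the case analysis on the relative size of $\beta|k|$ and $\alpha^j$ must be carried out carefully, and this is where the bulk of the bookkeeping lies.

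For the scaling coefficients (\ref{eqn:scaling-coeff-decay}) no vanishing moments are available, but none are needed. Here $\langle \ph, \chi_{0,k} \rangle = \int_\R e^{-\pi t^2}\, \overline{\chi(t-\beta k)}\, dt$, and I would split the integral at $|t| = \beta|k|/2$. On $\{|t| \le \beta|k|/2\}$ the shift satisfies $|t-\beta k| \ge \beta|k|/2$, so (\ref{eqn:scaling-function-decay}) gives $|\chi(t-\beta k)| \lesssim (\beta|k|)^{-(m+2)}$ while $\int_\R e^{-\pi t^2}\, dt$ is finite; on $\{|t| > \beta|k|/2\}$ the Gaussian $e^{-\pi t^2}$ contributes a super-polynomially small tail against the bounded $\chi$. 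Both pieces are dominated by a constant times $(\beta|k|)^{-(m+1)}$, which establishes (\ref{eqn:scaling-coeff-decay}). This part is routine; the wavelet estimate is where the real work lies.
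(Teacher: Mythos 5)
Your setup is the same as the paper's: Taylor-expand the Gaussian about the wavelet's centre $w=\alpha^{-j}\beta k$ (you rescale first and expand $G$ at $0$, the paper expands $\ph$ at $w$ and rescales afterwards --- these are identical), use the $m$ vanishing moments to discard the degree-$(m-1)$ polynomial, and split the remaining integral at $|x|=\beta|k|/2$ so that the inner piece carries a factor $e^{-c(\alpha^{-j}\beta k)^2}$ from the Gaussian and the outer piece a factor $(\beta|k|)^{-m-1}$ from the tail hypothesis (\ref{eqn:wavelet-decay}). Your treatment of the scaling coefficients (\ref{eqn:scaling-coeff-decay}) is complete and matches the paper's.

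The gap is precisely the step you defer: you announce that ``the case analysis on the relative size of $\beta|k|$ and $\alpha^j$ must be carried out carefully'' and then never carry it out, so the product bound (\ref{eqn:wav-coeff-decay}) is not established. This is not bookkeeping; it is the whole content of the claim, and the tension you sense is real. Your own computation yields the uniform-in-$k$ bound $|\langle \ph,\psi_{j,k}\rangle|\le C\,\alpha^{-j(m+1/2)}$ (note: exponent $m+1/2$, not the claimed $m+3/2$; the spatial split does not manufacture the extra $\alpha^{-j}$, contrary to your hope), and after the split the inner region contributes a term of order $\alpha^{-j(m+1/2)}e^{-\pi(\alpha^{-j}\beta k)^2/8}$. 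Dominating that by $\alpha^{-j(m+3/2)}(\beta|k|)^{-m-1}$ would require $e^{-\pi(\alpha^{-j}\beta k)^2/8}(\beta|k|)^{m+1}\lesssim \alpha^{-j}$, which fails whenever $\alpha^{-j}\beta|k|\to 0$ while $\beta|k|\to\infty$ (for instance $\beta|k|\approx\alpha^{j/2}$, $j\to\infty$): there the exponential is essentially $1$ and nothing in the argument produces any $k$-decay at all. Indeed, for a compactly supported $\psi$ with exactly $m$ vanishing moments, $\int x^m\psi(x)\,dx\neq 0$ and $m$ even, the coefficient in that regime is genuinely of size $\asymp\alpha^{-j(m+1/2)}$, which exceeds the right-hand side of (\ref{eqn:wav-coeff-decay}); so no refinement of the split closes the gap for the bound as stated. (The paper's own proof asserts the corresponding domination $\beta|k|\,e^{-\pi(\alpha^{-j}\beta k)^2/8}+2^{m+2}(\beta|k|)^{-m-1}\le C(\beta|k|)^{-m-1}$ in its final display without this case analysis, so you have located the weak point rather than resolved it.) A completable version would either restrict to the regime $\alpha^{-j}\beta|k|\gtrsim 1$ or replace the right-hand side by something like $C\,\alpha^{-j(m+1/2)}\bigl(1+\alpha^{-j}\beta|k|\bigr)^{-m-1}$, which your inner/outer split does deliver; your proposal does neither.
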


\begin{proof}
See Section \ref{sec:proofs_sparse_reg}.
\end{proof}
As an immediate consequence, the weighted $\ell^p-$norm (\ref{eqn:wavelet-penalty}) of the wavelet coefficients of $\ph$ is finite:
\begin{corollary}
For $\alpha>1, \beta>0$, let $\{ \chi_{0,k}, \psi_{j,k}\}_{j \in \mathbb{N}_0, k \in \mathbb{Z}}$ be a wavelet frame of $L^2(\mathbb{R})$, where $\psi$ has $m \in \mathbb{N}$ vanishing moments and $\psi$ and $\chi$ decay as in (\ref{eqn:wavelet-decay}) and (\ref{eqn:scaling-function-decay}), respectively. Then, for $p \geq 1$ and $s \leq m+1$,
\begin{equation}
\vertiii{ \ph} :=  \sum_{k \in \mathbb{Z}} | \langle \ph, \chi_{0,k} \rangle|^p + \sum_{j=0}^{\infty} \alpha^{j \sigma p} \sum_{k \in \mathbb{Z}} | \langle \ph, \psi_{j,k} \rangle |^p  < \infty,
\end{equation}
where again $\sigma = s+1/2-1/p$.
\end{corollary}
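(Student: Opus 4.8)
The plan is to substitute the coefficient decay estimates (\ref{eqn:wav-coeff-decay})--(\ref{eqn:scaling-coeff-decay}) of Proposition \ref{prop:wavelet-decay} into the two sums defining $\vertiii{\ph}$ and to verify that the resulting numerical series converge. The scaling sum reduces to a convergent $p$-series and the wavelet sum to a single geometric series in $j$; the only delicate point is the behaviour at $k=0$, where the stated bounds are singular.

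First I would handle the scaling part $\sum_{k\in\Z}|\langle\ph,\chi_{0,k}\rangle|^p$. The $k=0$ term equals $|\langle\ph,\chi\rangle|^p$, which is finite by Cauchy--Schwarz. For $k\neq0$, inserting (\ref{eqn:scaling-coeff-decay}) bounds the tail by $C^p\beta^{-(m+1)p}\sum_{k\neq0}|k|^{-(m+1)p}$, and this $p$-series converges since $m\in\N$ and $p\geq1$ give $(m+1)p\geq2>1$.

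Next I would treat the wavelet part $\sum_{j\geq0}\alpha^{j\sigma p}\sum_{k\in\Z}|\langle\ph,\psi_{j,k}\rangle|^p$, splitting each inner sum into $k=0$ and $k\neq0$. For $k\neq0$, (\ref{eqn:wav-coeff-decay}) yields $\sum_{k\neq0}|\langle\ph,\psi_{j,k}\rangle|^p\leq C^p\beta^{-(m+1)p}\big(\sum_{k\neq0}|k|^{-(m+1)p}\big)\alpha^{-j(m+3/2)p}$, so this contribution is a finite constant times $\sum_{j\geq0}\alpha^{jp(\sigma-(m+3/2))}$. Because $\alpha>1$, this geometric series converges exactly when $\sigma-(m+3/2)<0$, and here the hypothesis $s\leq m+1$ enters: $\sigma-(m+\tfrac32)=s+\tfrac12-\tfrac1p-(m+\tfrac32)\leq-\tfrac1p<0$, with the borderline $s=m+1$ producing precisely $\sum_{j\geq0}\alpha^{-j}$.

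The main obstacle is the $k=0$ term of the wavelet sum, since the factor $(\beta|k|)^{-m-1}$ in (\ref{eqn:wav-coeff-decay}) degenerates there and cannot be used directly. To close this gap I would appeal to the structure of the proof of Proposition \ref{prop:wavelet-decay}: the $j$-decay $\alpha^{-j(m+3/2)}$ originates from a vanishing-moment (Taylor-subtraction) argument that is independent of $k$, so its $k=0$ specialization gives $|\langle\ph,\psi_{j,0}\rangle|\leq C\,\alpha^{-j(m+3/2)}$ (only the spatial-separation factor is absent). With this bound the $k=0$ contribution is again dominated by the same convergent geometric series $\sum_{j\geq0}\alpha^{jp(\sigma-(m+3/2))}$. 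Adding the finitely bounded scaling and wavelet parts then yields $\vertiii{\ph}<\infty$.
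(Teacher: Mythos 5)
Your proof is correct and follows the route the paper intends: the paper offers no proof beyond calling the corollary an ``immediate consequence'' of Proposition \ref{prop:wavelet-decay}, and your direct substitution of (\ref{eqn:wav-coeff-decay})--(\ref{eqn:scaling-coeff-decay}) followed by the $p$-series and geometric-series checks (with $s\leq m+1$ giving exponent $\sigma-(m+3/2)\leq -1/p<0$) is exactly that argument made explicit. Your separate treatment of the $k=0$ terms, where the stated bounds degenerate, is a genuine and necessary refinement that the paper glosses over; the $k$-independent bound $|\langle\ph,\psi_{j,0}\rangle|\lesssim \alpha^{-j(m+3/2)}$ does follow from the Taylor/vanishing-moment step of the proposition's proof exactly as you describe.
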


\begin{proposition}
As before, let $\{ \chi_{0,k}, \psi_{j,k}\}_{j \in \mathbb{N}_0, k \in \mathbb{Z}}$ be a wavelet frame of $L^2(\mathbb{R})$,  so that $\psi$ and $\chi$ decay as in (\ref{eqn:wavelet-decay}) and (\ref{eqn:scaling-function-decay}), respectively. If for $p \in [1,2]$ and $s \geq 0$, $\psi$ has $m \in \mathbb{N}$ vanishing moments with $2 m - \sigma p +3/2 > 0$, then
$$
\Big| \vertiii{f_a^+} - \vertiii{f_a^-} \Big| \leq C_{m,\alpha,\beta} \, a^{-m},
$$
for some constant $C_{m,\alpha,\beta}$ depending on only $m, \alpha$ and $\beta$.
\end{proposition}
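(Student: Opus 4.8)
The plan is to expand both penalties on the \emph{same} analysis coefficients and extract cancellation from $f_a^\pm=u_{-a}\pm u_a$. Setting $P_{j,k}:=\langle u_{-a},\psi_{j,k}\rangle$ and $Q_{j,k}:=\langle u_a,\psi_{j,k}\rangle$, we have $\langle f_a^+,\psi_{j,k}\rangle=P_{j,k}+Q_{j,k}$ and $\langle f_a^-,\psi_{j,k}\rangle=P_{j,k}-Q_{j,k}$, and likewise at scale level with $\chi_{0,k}$. By the triangle inequality,
$$\Big|\vertiii{f_a^+}-\vertiii{f_a^-}\Big|\le\sum_{k\in\Z}\Delta^\chi_k+\sum_{j\ge0}\alpha^{j\sigma p}\sum_{k\in\Z}\Delta_{j,k},$$
where $\Delta_{j,k}:=\big||P_{j,k}+Q_{j,k}|^p-|P_{j,k}-Q_{j,k}|^p\big|$ and $\Delta^\chi_k$ is the corresponding scaling term. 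It therefore suffices to bound each $\Delta$ and resum.

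First I would record an elementary inequality. For $p\in[1,2]$ and $P,Q\in\C$, integrating $s\mapsto|P+sQ|^p$ over $s\in[-1,1]$ gives $\big||P+Q|^p-|P-Q|^p\big|\le 2p\,|Q|(|P|+|Q|)^{p-1}$; the same bound holds with $|P|$ in place of $|Q|$ (integrate in the other direction), so $\Delta\le 2p\min(|P|,|Q|)(|P|+|Q|)^{p-1}$. Since $p-1\in[0,1]$, subadditivity of $t\mapsto t^{p-1}$ yields $(|P|+|Q|)^{p-1}\le|P|^{p-1}+|Q|^{p-1}$. The essential gain is that only the \emph{smaller} of the two coefficients survives in $\min$: for $k\ge0$ the atom $\psi_{j,k}$ is separated from the left Gaussian $u_{-a}$, so I would keep $P_{j,k}$ as the small factor and bound the near factor trivially by $C_0:=\|\ph\|_2\|\psi\|_2$, reducing $\Delta_{j,k}\le 2p\big(|P_{j,k}|^p+C_0^{p-1}|P_{j,k}|\big)$; for $k<0$ the roles of $u_a$ and $u_{-a}$ are symmetric.

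Next I would upgrade Proposition \ref{prop:wavelet-decay} to the shifted Gaussian. Since $u_{-a}=T_{-a}\ph$ and translation only shifts the effective center $\beta k\mapsto\beta k+\alpha^j a$ in the computation underlying that proposition, the same argument gives, for $k\ge0$,
$$|\langle u_{-a},\psi_{j,k}\rangle|\le C\,\alpha^{-j(m+3/2)}(\beta k+\alpha^j a)^{-m-1},\qquad |\langle u_{-a},\chi_{0,k}\rangle|\le C\,(\beta k+a)^{-m-1},$$
and here the separation $\beta k+\alpha^j a\ge\alpha^j a>0$ keeps us away from the singularity of the estimate. Summing the far coefficient, $\sum_{k\ge0}(\beta k+\alpha^j a)^{-m-1}\le C'(\alpha^j a)^{-m}$ and $\sum_{k\ge0}(\beta k+a)^{-m-1}\le C'a^{-m}$: this is exactly where the rate $a^{-m}$ is produced, while at wavelet level it carries an additional factor $\alpha^{-jm}$.

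Combining the last two steps, the $|P_{j,k}|$-part of the wavelet sum is bounded by $\sum_{j\ge0}\alpha^{j\sigma p}\,\alpha^{-j(m+3/2)}\,C'(\alpha^j a)^{-m}=C'a^{-m}\sum_{j\ge0}\alpha^{\,j(\sigma p-2m-3/2)}$, and this geometric series converges precisely when $2m-\sigma p+3/2>0$ --- the stated hypothesis; the $|P_{j,k}|^p$-part decays strictly faster in both $j$ and $a$ and is summed identically, and the scaling sum contributes $\le C'a^{-m}$ with no $j$-summation. This yields $\big|\vertiii{f_a^+}-\vertiii{f_a^-}\big|\le C_{m,\alpha,\beta}\,a^{-m}$ for $a\ge1$, and for $0<a<1$ the inequality is automatic once $C_{m,\alpha,\beta}$ absorbs the uniform bound on the two (finite, continuous in $a$) penalties. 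I expect the main obstacle to be the bookkeeping of this final step: one must recognize $2m-\sigma p+3/2>0$ as exactly the summability threshold of the weighted $j$-series, and one must systematically assign the well-separated Gaussian to the $\min$ so as never to evaluate the decay estimate near its singularity.
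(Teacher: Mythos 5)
Your proposal is correct and follows essentially the route the paper intends: its proof of this proposition is a one-line reference to the argument of Proposition \ref{prop:small_lp_stft}, i.e.\ reduce the difference of the $p$-th powers to the \emph{smaller} (far-field) coefficient via a min-type estimate, split the sum according to the sign of $k$, apply the decay bounds (\ref{eqn:wav-coeff-decay})--(\ref{eqn:scaling-coeff-decay}) with the center shifted by $\pm a$ (so $\beta k\mapsto\beta k+\alpha^j a$), and resum, with the $k$-sum producing the factor $(\alpha^j a)^{-m}$ and the hypothesis $2m-\sigma p+3/2>0$ being exactly the summability threshold of the resulting geometric series in $j$. Your derivative-based inequality $\bigl||P+Q|^p-|P-Q|^p\bigr|\le 2p\min(|P|,|Q|)(|P|+|Q|)^{p-1}$ is only a cosmetic variant of the paper's combination of the power inequality with two applications of the reverse triangle inequality, so this counts as the same proof, carried out in more detail than the paper provides.
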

\begin{proof}
The derivation can be done similarly to the one in Proposition \ref{prop:small_lp_stft}, employing the bounds (\ref{eqn:wav-coeff-decay}) and (\ref{eqn:scaling-coeff-decay}).
\end{proof}

\subsubsection{Penalties on the Gabor transform $V_\ph f$}

Algorithms for phase retrieval often recover the signal transform $V_\ph f$ instead of reconstructing $f$ directly. This way, the problem of Gabor phase retrieval can be formulated in two dimensions as reconstructing the phase of a function in $L^2(\mathbb{R}^2)$. Of course $f$ can be retained from $V_\ph f$, once the latter is known.

In this case, the minimizing functional becomes
$$
\argmin_{F \in L^2(\mathbb{R}^2) / S^1} \| |F| - u \|^2_{L^2(\mathbb{R}^2)}
$$
and one could add a regularization penalty on the Gabor transform:
$$
\argmin_{F \in L^2(\mathbb{R}^2) / S^1} \| |F| - u \|_{L^2(\mathbb{R}^2)}^2 + \tau \vertiii{F}
$$
for some regularization parameter $\tau$ and regularization term $\vertiii{\cdot}$ on $L^2(\mathbb{R}^2)$.

As in the previous section, we ask whether typical regularization penalties can stably discriminate the functions constructed in Section \ref{subsec:couple}, this time in the transform domain, i.e. $V_\ph f_a^+$ and $V_\ph f_a^-$. As one can see from Lemma \ref{lem:V_u}, these functions are the sums of two translated and modulated $2D$ Gaussian functions. Therefore it is clear that regularization penalties promoting smoothness will not improve the stability of the reconstruction problem.

Furthermore, since $V_\ph f_a^+$ and $V_\ph f_a^-$ are in the Schwartz space $\mathcal{S}(\mathbb{R}^2)$, their frame coefficients with respect to space-frequency or space-scale representations in $L^2(\mathbb{R}^2)$ are rapidly decaying. 

More precisely, for a window $\mathbf{g} \in \mathcal{S}(\mathbb{R}^2)$ and a signal $F \in \mathcal{S}(\mathbb{R}^2)$, the transform $V_\mathbf{g} F$ is an element of $\mathcal{S}(\mathbb{R}^4)$ \cite[Theorem 11.2.5]{grochenig}. From this one can deduce analogous results  to Propositions \ref{prop:finite_lp_stft} and \ref{prop:small_lp_stft}.

The smoothness of $V_\ph f_a^+$ and $V_\ph f_a^-$ also implies that these functions can be sparsely approximated in space-scale systems, such as two-dimensional wavelets, curvelets, shearlets, etc., given that the frame consists of functions that have fast spatial decay and sufficient regularity. As an example, we refer to \cite[Theorem 8.2]{candes2004new} for the decay of curvelet coefficients of functions that lie in a Sobolev space.

\section{Proof of Proposition \ref{prop:upper-bound}}
\label{sec:proofs}

We start with some simple observations. First, we note that computing $V_\ph \ph$ gives

\begin{equation}\label{Vphi_phi}
V_\ph \ph(x,y) = \frac{1}{\sqrt{2}} e^{-\pi i x y} e^{-\pi/2(x^2+y^2)}.
\end{equation}

A straightforward calculation yields

\begin{equation}\label{VgTa}
	V_\ph T_a f(x,y) = e^{-2 \pi i a y} V_\ph f(x-a,y),
\end{equation}
where $T_a$ is the translation operator defined in (\ref{eqn:translation}). Furthermore, Equations (\ref{Vphi_phi}) and (\ref{VgTa}) imply the following:
\begin{lemma}\label{lem:V_u} Let $u_a$ be the shifted Gaussian defined in (\ref{eqn:gauss_shift}). Then,
$$
	V_\ph u_a (x,y)= \frac{1}{\sqrt{2}} e^{- \pi i a y} e^{-\pi i x y} e^{-\frac{\pi}{2} (x-a)^2} e^{-\frac{\pi}{2} y^2}.
$$
\end{lemma}

\begin{proof}
\begin{align*}
V_\ph u_a (x,y) &= V_\ph T_a \ph (x,y) = e^{-2 \pi i a y} V_\ph \ph (x-a,y) \\
&= e^{-2 \pi i a y} \frac{1}{\sqrt{2}} e^{-\pi i (x-a) y} e^{-\frac{\pi}{2} ((x-a)^2+y^2)} \\
&= \frac{1}{\sqrt{2}} e^{-\pi i a y} e^{-\pi i x y} e^{-\frac{\pi}{2}(x-a)^2} e^{- \frac{\pi}{2} y^2}.
\end{align*} 
\end{proof}
The goal of this section is to find an upper bound on the expression
\begin{equation}\label{eqn:bdball}
	\left\| |V_\ph f_a^+|-|V_\ph f_a^-|\right\|_{W^{1,2}(\mathbb{R}^2)}
\end{equation}
that is of the order $e^{-a^2 \pi/2}$. We start with a pointwise estimate to obtain a bound for the $L^2$-norm.
\subsection{Pointwise estimates}

\begin{lemma}
Let $f_a^+, f_a^-$ be the functions defined in (\ref{eqn:f_a}). For $(x,y) \in \mathbb{R}^2$, the following two inequalities hold:
\begin{align}
	\left|{|V_\ph f_a^+(x,y)|-|V_\ph f_a^-(x,y)|}\right| & \leq \sqrt{2} e^{- \frac{\pi}{2}((x-a)^2+y^2)}, \label{eqn:pw1} \\
	\left|{|V_\ph f_a^+(x,y)|-|V_\ph f_a^-(x,y)|}\right| &\leq \sqrt{2} e^{- \frac{\pi}{2}((x+a)^2+y^2)}. \label{eqn:pw2}
\end{align}
\end{lemma}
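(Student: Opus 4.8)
The plan is to reduce everything to a pointwise inequality between two complex numbers via the linearity of the Gabor transform. Since $V_\ph$ is linear and $f_a^\pm = u_{-a} \pm u_a$, I would first write
\[
V_\ph f_a^+(x,y) = V_\ph u_{-a}(x,y) + V_\ph u_a(x,y), \qquad V_\ph f_a^-(x,y) = V_\ph u_{-a}(x,y) - V_\ph u_a(x,y).
\]
Abbreviating $A := V_\ph u_{-a}(x,y)$ and $B := V_\ph u_a(x,y)$, the left-hand side of both (\ref{eqn:pw1}) and (\ref{eqn:pw2}) becomes $\big|\, |A+B| - |A-B| \,\big|$, and the task is to bound this purely in terms of $|A|$ and $|B|$.

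The key step is to apply the reverse triangle inequality $\big||z|-|w|\big| \le |z-w|$ twice, with two different pairings. Taking $z = A+B$ and $w = A-B$ gives
\[
\big|\, |A+B| - |A-B| \,\big| \le |(A+B)-(A-B)| = 2|B|,
\]
while taking $z = A+B$ and $w = B-A$ (which has $|w| = |A-B|$) gives
\[
\big|\, |A+B| - |A-B| \,\big| \le |(A+B)-(B-A)| = 2|A|.
\]
Thus the difference of moduli is controlled by $2\min(|A|,|B|)$, and in particular by each of $2|A|$ and $2|B|$ separately; these two choices will yield (\ref{eqn:pw2}) and (\ref{eqn:pw1}) respectively.

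It then remains to compute $|A|$ and $|B|$, which is immediate from Lemma \ref{lem:V_u}: all the modulation and chirp factors $e^{-\pi i a y}$ and $e^{-\pi i x y}$ (and their $-a$ counterparts) have modulus $1$, so
\[
|B| = |V_\ph u_a(x,y)| = \tfrac{1}{\sqrt2}\, e^{-\frac{\pi}{2}((x-a)^2 + y^2)}, \qquad |A| = |V_\ph u_{-a}(x,y)| = \tfrac{1}{\sqrt2}\, e^{-\frac{\pi}{2}((x+a)^2 + y^2)}.
\]
Substituting $2|B|$ into the bound gives (\ref{eqn:pw1}) with the constant $2/\sqrt2 = \sqrt2$, and substituting $2|A|$ gives (\ref{eqn:pw2}). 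I do not expect any genuine obstacle here; the only point requiring a moment's care is the sign flip $w = B-A$ in the second application of the reverse triangle inequality, which is precisely what lets a single elementary inequality produce both one-sided Gaussian envelopes, one centered at $x = a$ and one at $x = -a$.
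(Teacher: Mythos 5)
Your proof is correct and is essentially the paper's argument: both rest on the reverse triangle inequality applied to $|A+B|$ versus $|A-B|$ with $A=V_\ph u_{-a}$, $B=V_\ph u_a$, yielding the bound $2\min(|A|,|B|)$ and then reading off the Gaussian envelopes from Lemma \ref{lem:V_u}. The only cosmetic difference is that the paper first factors out the $e^{-\frac{\pi}{2}((x+a)^2+y^2)}$ term before applying the inequality, whereas you apply it directly and obtain both (\ref{eqn:pw1}) and (\ref{eqn:pw2}) in one stroke.
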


\begin{proof}
We only derive the first estimate, the other being analogous:
\begin{align*}
	&\left|{|V_\ph f_a^+(x,y)|-|V_\ph f_a^-(x,y)|}\right| = \\
	&= \frac{1}{\sqrt{2}} \left| |e^{-\frac{\pi}{2} ((x+a)^2+y^2)} + e^{-2 \pi i a y} e^{-\frac{\pi}{2}((x-a)^2+y^2)}| - |e^{-\frac{\pi}{2} ((x+a)^2+y^2)} - e^{-2 \pi i a y} e^{-\frac{\pi}{2}((x-a)^2+y^2)}|\right| \\
	&= \frac{1}{\sqrt{2}} e^{-\frac{\pi}{2} ((x+a)^2+y^2)} \cdot \left| |1+e^{-2\pi i a y} e^{2 a x \pi}|-|1-e^{-2 \pi i a y} e^{2 a x \pi} |\right| \\
	& \leq \frac{1}{\sqrt{2}} e^{-\frac{\pi}{2} ((x+a)^2+y^2)} \cdot 2 e^{2 a x \pi} \\
	& \leq \sqrt{2} e^{-\frac{\pi}{2} ((x-a)^2+y^2)},
\end{align*}	
where the first inequality holds due to the reverse triangle inequality.
\end{proof}
\subsection{$L^2-$estimates}
The previous two estimates can now be combined to obtain an upper bound on the $L^2$-norm. Within $B_{2a}(0)$, i.e. a disk of radius $2a$ around the origin, we will employ (\ref{eqn:pw1}) on the half disk corresponding to $x < 0$ and (\ref{eqn:pw2}) on the half disk corresponding to $x\geq0$. For bounding the portion outside of $B_{2a}(0)$ either of the estimates (\ref{eqn:pw1}), (\ref{eqn:pw2}) will do.
\begin{lemma}\label{lem:normbd}
Let $f_a^+, f_a^-$ be the functions defined in (\ref{eqn:f_a}). Then,
	$$
	\left\| |V_\ph f_a^+|-|V_\ph f_a^-| \right\|_{L^2(\mathbb{R}^2)} \leq  2 \sqrt{1+2a^2\pi} \, e^{-a^2 \pi/2}.
	$$
\end{lemma}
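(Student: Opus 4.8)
The plan is to bound the squared $L^2$-norm of $D(x,y) := |V_\ph f_a^+(x,y)| - |V_\ph f_a^-(x,y)|$ by splitting $\R^2$ into the disk $B_{2a}(0)$ and its complement, and on each region selecting whichever of the two pointwise estimates (\ref{eqn:pw1}), (\ref{eqn:pw2}) is the smaller. Since $(x+a)^2 \ge a^2$ whenever $x \ge 0$ and $(x-a)^2 \ge a^2$ whenever $x \le 0$, I would use (\ref{eqn:pw2}) on the half-plane $\{x \ge 0\}$ and (\ref{eqn:pw1}) on $\{x < 0\}$, so that in each case the relevant Gaussian is centered in the ``far'' half and therefore decays.

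For the interior, I would observe that on the half-disk $B_{2a}(0)\cap\{x\ge 0\}$ estimate (\ref{eqn:pw2}) gives $|D(x,y)|^2 \le 2 e^{-\pi((x+a)^2+y^2)} \le 2 e^{-\pi a^2}$, and symmetrically on $B_{2a}(0)\cap\{x<0\}$ estimate (\ref{eqn:pw1}) gives the same uniform bound. Multiplying this constant bound by the area $|B_{2a}(0)| = 4\pi a^2$ yields $\int_{B_{2a}(0)} |D|^2 \le 8\pi a^2\, e^{-\pi a^2}$. It is essential to use the shifted Gaussians here rather than integrating them exactly, because near the origin $e^{-\pi(x^2+y^2)}$ is not small; the decay must come from the translation by $a$.

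For the exterior the key observation is the elementary comparison $(x+a)^2+y^2 \ge x^2+y^2$ for $x\ge 0$ and $(x-a)^2+y^2 \ge x^2+y^2$ for $x\le 0$. Applying the corresponding estimate on each half of $\R^2\setminus B_{2a}(0)$ therefore bounds $|D|^2 \le 2 e^{-\pi(x^2+y^2)}$ throughout the exterior, and a single polar-coordinate integration over $\{r\ge 2a\}$ gives $\int_{\R^2\setminus B_{2a}(0)} |D|^2 \le 2\int_0^{2\pi}\!\int_{2a}^\infty e^{-\pi r^2} r\, dr\, d\theta = 2 e^{-4\pi a^2} \le 2 e^{-\pi a^2}$.

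Combining the two contributions gives $\|D\|_{L^2(\R^2)}^2 \le (8\pi a^2 + 2)\, e^{-\pi a^2} \le 4(1+2\pi a^2)\, e^{-\pi a^2}$, and taking square roots produces the claimed bound $2\sqrt{1+2a^2\pi}\, e^{-a^2\pi/2}$. There is no genuine obstacle beyond bookkeeping: the one thing to get right is the matching of estimate to region, namely that the interior requires the area-times-uniform-bound argument (the Gaussian is not integrably small near the origin, so no pointwise decay is available there) while the exterior requires the comparison to the origin-centered Gaussian (the region is unbounded, so no area bound is available). Everything else reduces to a direct Gaussian integral.
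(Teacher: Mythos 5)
Your proof is correct and follows essentially the same strategy as the paper's: split $\R^2$ into $B_{2a}(0)$ and its complement, and on the interior pair estimate (\ref{eqn:pw1}) with $\{x<0\}$ and (\ref{eqn:pw2}) with $\{x\ge 0\}$, which yields the same $8\pi a^2 e^{-\pi a^2}$ bound (the paper's polar-coordinate computation reduces to exactly your uniform-bound-times-area argument). The only divergence is on the exterior, where you keep switching estimates between half-planes and compare to the origin-centered Gaussian to get $2e^{-4\pi a^2}$, whereas the paper uses (\ref{eqn:pw1}) everywhere and obtains $4e^{-\pi a^2}$ via the substitution $t=\sqrt{\pi}(r-a)$; both suffice for the stated constant.
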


\begin{proof}
	First, we derive a bound on $B_{2a}(0)$ in which we transform to polar coordinates $x = r \cos \ph, y = r \sin \ph$:
	\begin{align*}
	&\int_{\ann} \Big( |V_\ph f_a^+(x,y)|-|V_\ph f_a^-(x,y)| \Big)^2 d(x,y) \leq \\
	&\leq 2 \int_{\pi/2}^{3\pi/2} \int_{0}^{2 a} e^{- \pi (r^2 - 2 r a \cos \ph + a^2)} r \, dr \, d\ph + 2 \int_{-\pi/2}^{\pi/2} \int_{0}^{2 a} e^{- \pi (r^2 + 2 r a \cos \ph + a^2)} r \, dr \, d\ph \\
	&\leq 2 \, e^{-a^2 \pi} \Big( \int_{\pi/2}^{3\pi/2} \int_{0}^{2 a} e^{2 r a \pi \cos \ph} r \, dr \, d\ph + \int_{-\pi/2}^{\pi/2} \int_{0}^{2 a} e^{- 2 r a \pi \cos \ph} r \, dr \, d\ph \Big) \\
	& \leq 2 \, e^{-a^2 \pi}  \int_0^{2\pi} \frac{r^2}{2} \bigg|_{0}^{2a} d\ph = 8a^2 \pi \, e^{- a^2 \pi}, 
	\end{align*}
	where in the last line we use that $e^{2 r a \pi \cos \ph} \leq 1$ for $\ph \in [\frac{\pi}{2}, \frac{3 \pi}{2}]$ and $e^{- 2 r a \pi \cos \ph} \leq 1$ for $\ph \in [-\frac{\pi}{2}, \frac{\pi}{2}]$.
	
	Next, the norm on $\mathbb{R}^2 \backslash \ann$ can be bounded from above using (\ref{eqn:pw1}):
	\begin{align*}
	&\int_{\mathbb{R}^2\backslash \ann}  \Big( |V_\ph f_a^+(x,y)|-|V_\ph f_a^-(x,y)| \Big)^2 d(x,y) \leq \\
	& \leq 2 \int_{\mathbb{R}^2\backslash \ann} e^{-\pi ((x-a)^2+y^2)} d(x,y) \\
	& \leq 2 \int_0^{2 \pi} \int_{2a}^{\infty} e^{-\pi (r^2 + a^2 - 2 a r \cos \ph)} r dr d\ph \\
	& \leq 4 \pi \int_{2a}^{\infty} e^{-\pi (r-a)^2} r dr \leq 4 \pi \int_{2a}^{\infty} e^{-\pi (r-a)^2} 2 (r-a) dr \leq 4 e^{-a^2 \pi},
	\end{align*}
	where the second to last inequality is obtained by using that the integration bounds guarantee $r \leq 2(r-a)$.
\end{proof}

\subsection{$W^{1,2}-$estimates}
What remains to be shown is an upper bound on 
$$
	\|\nabla |V_\ph f_a^+|-\nabla |V_\ph f_a^-|\|_{L^2(\mathbb{R}^2)} = \Big( \|\frac{\partial}{\partial x} |V_\ph f_a^+|-\frac{\partial}{\partial x} |V_\ph f_a^-|\|_{L^2(\mathbb{R}^2)}^2 + \|\frac{\partial}{\partial y} |V_\ph f_a^+|-\frac{\partial}{\partial y} |V_\ph f_a^-|\|_{L^2(\mathbb{R}^2)}^2\Big)^{1/2}.
$$ 
For this, we start with a pointwise estimate on
$$
	\left| \dx |V_\ph f_a^+(x,y)| - \dx |V_\ph f_a^-(x,y) | \right|,
$$
where again we derive two upper bounds between which one needs to switch on $B_{2a}(0)$ depending on the sign of $x$.
\begin{lemma}\label{lem:xneg-dx-norm}
The following bounds hold for the Gabor transform magnitudes of $f_a^+$ and $f_a^-$ for all points $(x,y) \in \mathbb{R}^2$:
\begin{align}
	\left| \dx |V_\ph f_a^+(x,y)| - \dx |V_\ph f_a^-(x,y) | \right| &\leq \sqrt{2} (3 a - x) \pi e^{-\frac{\pi}{2} ((x-a)^2+y^2)}, \label{eqn:pw1-dx}\\
	\left| \dx |V_\ph f_a^+(x,y)| - \dx |V_\ph f_a^-(x,y) | \right| &\leq \sqrt{2} (3 a + x) \pi e^{-\frac{\pi}{2} ((x+a)^2+y^2)}. \label{eqn:pw2-dx}
\end{align}
\end{lemma}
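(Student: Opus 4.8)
The plan is to reduce everything to the explicit scalar formula for $|V_\ph f_a^\pm|$ and then differentiate, exactly paralleling the previous pointwise lemma. First I would use Lemma \ref{lem:V_u} together with $f_a^\pm = u_{-a}\pm u_a$ and factor out the left Gaussian $A:=e^{-\frac{\pi}{2}((x+a)^2+y^2)}$, precisely as in the proof of the previous lemma, to write
$$|V_\ph f_a^+(x,y)| = \tfrac{1}{\sqrt2}A\,|1+w|,\qquad |V_\ph f_a^-(x,y)| = \tfrac{1}{\sqrt2}A\,|1-w|,$$
where $w:=e^{-2\pi i a y}e^{2\pi a x}$, $\rho:=|w|=e^{2\pi a x}$, and I record the identity $A\rho = e^{-\frac{\pi}{2}((x-a)^2+y^2)}$, the Gaussian on the right of (\ref{eqn:pw1-dx}). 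Thus the object to estimate is $\dx\big(\tfrac1{\sqrt2}A\,\Delta\big)$ with $\Delta:=|1+w|-|1-w|$.

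Next I would apply the product rule $\dx\big(\tfrac1{\sqrt2}A\Delta\big)=\tfrac1{\sqrt2}A_x\Delta+\tfrac1{\sqrt2}A\,\dx\Delta$ and bound the two terms separately. For the first term, $A_x:=\dx A=-\pi(x+a)A$, so $|A_x|=\pi|x+a|A$, while the reverse triangle inequality gives $|\Delta|\le|(1+w)-(1-w)|=2\rho$ — the very estimate already used for (\ref{eqn:pw1}); this contributes at most $\sqrt2\,\pi|x+a|\,A\rho$. For the second term I would compute $\dx\Delta$ from $|1\pm w|^2=1\pm2\rho\cos(2\pi a y)+\rho^2$ and $\dx\rho=2\pi a\rho$, obtaining $\dx\Delta=2\pi a\rho\big(\tfrac{c+\rho}{|1+w|}-\tfrac{\rho-c}{|1-w|}\big)$ with $c:=\cos(2\pi a y)$.

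The step I expect to be the main obstacle is controlling this last expression, since the denominators $|1\pm w|$ vanish at the zeros of $V_\ph f_a^\pm$, so finiteness of $\dx\Delta$ is not obvious. The resolution is the elementary inequality $|c\pm\rho|\le|1\pm w|$: indeed $(c+\rho)^2=c^2+2c\rho+\rho^2\le1+2c\rho+\rho^2=|1+w|^2$ because $|c|\le1$, and likewise for the other sign. Hence each ratio lies in $[-1,1]$, giving $|\dx\Delta|\le4\pi a\rho$, so the second term contributes at most $2\sqrt2\,\pi a\,A\rho$. Adding the two contributions and using $A\rho=e^{-\frac{\pi}{2}((x-a)^2+y^2)}$ yields the clean bound $\sqrt2\,\pi(|x+a|+2a)\,e^{-\frac{\pi}{2}((x-a)^2+y^2)}$; since $|x+a|+2a\le 3a-x$ for $x\le0$, this is exactly (\ref{eqn:pw1-dx}). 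Repeating the computation with the right Gaussian $B:=e^{-\frac{\pi}{2}((x-a)^2+y^2)}$ factored out instead (equivalently replacing $a$ by $-a$) produces the symmetric bound $\sqrt2\,\pi(|x-a|+2a)\,e^{-\frac{\pi}{2}((x+a)^2+y^2)}$, which gives (\ref{eqn:pw2-dx}) for $x\ge0$; as in the $L^2$ estimate of Lemma \ref{lem:normbd} one then applies (\ref{eqn:pw1-dx}) on $\{x<0\}$ and (\ref{eqn:pw2-dx}) on $\{x\ge0\}$.

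A final point requiring care is differentiability of the moduli: $|V_\ph f_a^\pm|$ fails to be differentiable precisely at the isolated zeros of $V_\ph f_a^\pm$ (which occur only where $x=0$ and $2ay\in2\Z$), and there the quotient formula for $\dx\Delta$ is of the form $0/0$. This is harmless, since $|V_\ph f_a^\pm|$ are Lipschitz as moduli of smooth functions, the estimate above holds at every point of differentiability, i.e.\ almost everywhere, and this is all that the subsequent $L^2$ gradient estimate in Lemma \ref{lem:normbd-dx} requires; I would state the pointwise inequalities with this a.e.\ understanding.
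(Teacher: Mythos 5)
Your proof is correct and follows essentially the same route as the paper: factor out the Gaussian $e^{-\frac{\pi}{2}((x+a)^2+y^2)}$, apply the product rule, bound the first term via the reverse triangle inequality $\big||1+w|-|1-w|\big|\le 2|w|$ and the second via the ratio bound $|\cos(2\pi a y)\pm e^{2\pi a x}|\le|1\pm e^{-2\pi i a y+2\pi a x}|$. If anything you are more careful than the paper, which merely asserts that ratio bound, does not discuss differentiability at the zeros of $V_\ph f_a^{\pm}$, and states the final inequality for all $(x,y)$ even though the step $|x+a|+2a\le 3a-x$ (as you note) requires $x\le 0$ — consistent with how the two estimates are actually deployed in Lemma \ref{lem:normbd-dx}.
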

\begin{proof}
To show (\ref{eqn:pw1-dx}), define 
	$$
		\eta_a^+(x,y):= |1+e^{-2 \pi i a y + 2 \pi a x}|
	$$
	so that we can write
	$$
		|V_\ph f_a^+(x,y)| = \frac{1}{\sqrt{2}} e^{-\frac{\pi}{2}((x+a)^2+y^2)} \eta_a^+(x,y)
	$$
	and
	$$
		\dx |V_\ph f_a^+(x,y)| = \frac{1}{\sqrt{2}}(-\pi(x+a)) e^{-\frac{\pi}{2}((x+a)^2+y^2)} \eta_a^+(x,y) +  \frac{1}{\sqrt{2}} e^{-\frac{\pi}{2}((x+a)^2+y^2)} \dx \eta_a^+(x,y).
	$$
	For $\dx \eta_a^+(x,y)$ we obtain
	$$
		\dx \eta_a^+(x,y) = 2 \pi a e^{2 \pi a x} \frac{\cos(2 \pi a y) +e^{2 \pi a x}}{|1+e^{-2 \pi i a y+2 \pi a x}|}.
	$$
	This implies 
	\begin{align}
		\dx |V_\ph f_a^+(x,y)| &= - \frac{1}{\sqrt{2}} \pi (x+a) e^{-\frac{\pi}{2}((x+a)^2+y^2)} |1+e^{-2 \pi i a y+2 \pi a x}| +  \label{eqn:V-dx}\\
		& + \frac{1}{\sqrt{2}} e^{-\frac{\pi}{2}((x+a)^2+y^2)} 2 \pi a e^{2 \pi a x} \frac{\cos(2 \pi a y) +e^{2 \pi a x}}{|1+e^{-2 \pi i a y+2 \pi a x}|}. \nonumber
	\end{align}
	A similar calculation for $\dx |V_\ph f_a^-(x,y)|$ yields
	\begin{align*}
		\dx |V_\ph f_a^-(x,y)| &= - \frac{1}{\sqrt{2}} \pi (x+a) e^{-\frac{\pi}{2}((x+a)^2+y^2)} |1-e^{-2 \pi i a y+2 \pi a x}| +  \\
		& + \frac{1}{\sqrt{2}} e^{-\frac{\pi}{2}((x+a)^2+y^2)} 2 \pi a e^{2 \pi a x} \frac{-\cos(2 \pi a y) +e^{2 \pi a x}}{|1-e^{-2 \pi i a y+2 \pi a x}|}.
	\end{align*}
	We can use these estimates to obtain an upper bound on the distance between the two quantities as follows:
	\begin{align}
		& \left| \dx |V_\ph f_a^+(x,y)| - \dx |V_\ph f_a^-(x,y)| \right| \leq \nonumber \\
		& \leq \frac{1}{\sqrt{2}} |x+a| \pi e^{-\frac{\pi}{2}((x+a)^2+y^2)} \left| |1+e^{-2 \pi i a y+2 \pi a x}| - |1-e^{-2 \pi i a y+2 \pi a x}|\right| + \nonumber  \\
		& + \frac{1}{\sqrt{2}} e^{-\frac{\pi}{2}((x-a)^2+y^2)} 2 \pi a \Big( \frac{|\cos(2 \pi a y) + e^{2 \pi a x}|}{|1+ e^{-2 \pi i a y+ 2 \pi a x}|} + \frac{|\ e^{2 \pi a x}- \cos(2 \pi a y) |}{|1- e^{-2 \pi i a y+ 2 \pi a x}|}\Big) \nonumber  \\
		& \leq \sqrt{2} |x+a| \pi e^{-\frac{\pi}{2}((x+a)^2+y^2)} e^{2 \pi a x} + \sqrt{2} e^{-\frac{\pi}{2}((x-a)^2+y^2)} 2 \pi a \\
		& \leq \sqrt{2} (3 a -x) \pi e^{-\frac{\pi}{2}((x-a)^2+y^2)}, \label{dxest}
	\end{align}
	where in the last step we have used the reverse triangle inequality and
	\begin{align}
	\frac{|\cos(2 \pi a y) + e^{2 \pi a x}|}{|1+ e^{-2 \pi i a y+ 2 \pi a x}|} + \frac{|e^{2 \pi a x} - \cos(2 \pi a y) |}{|1- e^{-2 \pi i a y+ 2 \pi a x}|} \leq 2.\label{eqn:bound-2nd-term}
	\end{align}
	Similarly, Equation (\ref{eqn:pw2-dx}) can be shown by defining
$$
		\widetilde{\eta}_a^+(x,y):= |e^{-2 \pi a x}+ e^{-2 \pi i a y} |
	$$
	which allows to write
	$$
		|V_\ph f_a^+(x,y)| = \frac{1}{\sqrt{2}} e^{-\frac{\pi}{2}((x-a)^2+y^2)} \widetilde{\eta}_a^+(x,y).
	$$
\end{proof}

Combining the inequalities in (\ref{eqn:pw1-dx}) and (\ref{eqn:pw2-dx}), we arrive at:
\begin{lemma}\label{lem:normbd-dx} There exists a uniform constant $C_1>0$ such that for all $a>0$,
$$
\Big\| \dx |V_\ph f_a^+|-\dx |V_\ph f_a^-|  \Big\|_{L^2(\mathbb{R}^2)} \leq C_1 \,a^2 \, e^{-a^2 \pi/2}.
$$
\end{lemma}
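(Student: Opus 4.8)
The plan is to integrate the two squared pointwise bounds of Lemma~\ref{lem:xneg-dx-norm} over $\mathbb{R}^2$, following the same template as the proof of Lemma~\ref{lem:normbd}. Writing $D(x,y) := \dx |V_\ph f_a^+(x,y)| - \dx |V_\ph f_a^-(x,y)|$, I would split $\int_{\mathbb{R}^2} D^2 = \int_{B_{2a}(0)} D^2 + \int_{\mathbb{R}^2 \setminus B_{2a}(0)} D^2$. On $B_{2a}(0)$ I apply (\ref{eqn:pw1-dx}) on the left half-disk $\{x<0\}$ and (\ref{eqn:pw2-dx}) on the right half-disk $\{x \geq 0\}$, so that on each half the Gaussian is centred at the far bump and hence small. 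Since $|V_\ph f_a^\pm|$ is even in $x$, the integrand $D^2$ is even in $x$, and the two half-disk contributions coincide; it therefore suffices to estimate one of them.

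Passing to polar coordinates $x = r\cos\ph$, $y = r\sin\ph$ turns the exponent of (\ref{eqn:pw1-dx}) into $-\pi(r^2 - 2ar\cos\ph + a^2)$; factoring out $e^{-\pi a^2}$ leaves $e^{-\pi r^2} e^{2\pi a r\cos\ph}$. On the half-disk $\{x<0\}$ one has $\cos\ph \leq 0$, so $e^{2\pi a r\cos\ph}\leq 1$, and trivially $e^{-\pi r^2}\leq 1$, exactly as in Lemma~\ref{lem:normbd}. The only new ingredient is the polynomial weight, which obeys $(3a - r\cos\ph)^2 \leq (3a + r)^2 \leq 25 a^2$ for $r \leq 2a$. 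Hence this half-disk contributes at most a constant multiple of $a^2 e^{-\pi a^2} \int_0^{2a} r\, dr = \text{const}\cdot a^4 e^{-\pi a^2}$. It is precisely this quadratic weight, absent in Lemma~\ref{lem:normbd}, that promotes the prefactor to order $a^2$ once one takes square roots.

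For the exterior integral I would again use (\ref{eqn:pw1-dx}), bound $e^{-\pi(r^2 - 2ar\cos\ph + a^2)} \leq e^{-\pi(r-a)^2}$ via $\cos\ph \leq 1$, estimate the weight by $(3a + r)^2$, integrate out the angle, and substitute $t = \sqrt{\pi}(r-a)$ as in Lemma~\ref{lem:normbd}. The radial integral then becomes a finite linear combination of incomplete Gaussian moments $\int_{\sqrt{\pi} a}^\infty t^j e^{-t^2}\, dt$ with coefficients polynomial in $a$; each moment is dominated by a power of $a$ times $e^{-\pi a^2}$, so the exterior piece is of lower order in $a$ than the disk piece. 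Summing the two contributions and taking the square root yields the claimed bound $C_1 a^2 e^{-a^2\pi/2}$.

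The step I expect to be the main obstacle is the exterior tail. Unlike in Lemma~\ref{lem:normbd}, the weight $(3a \mp x)^2$ grows without bound as $|x|\to\infty$, so one must check both that this polynomial growth is swallowed by the Gaussian and -- more importantly -- that after the substitution $t = \sqrt{\pi}(r-a)$ the resulting moment integrals neither degrade the exponential rate $e^{-\pi a^2}$ nor exceed the quadratic order in $a$ furnished by the disk. Tracking the exact $a$-dependence of these moments, rather than crudely bounding them by constants, is what secures the stated polynomial factor.
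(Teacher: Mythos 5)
Your proposal matches the paper's proof of this lemma essentially step for step: the same split into $B_{2a}(0)$ and its complement, the same assignment of (\ref{eqn:pw1-dx}) and (\ref{eqn:pw2-dx}) to the two half-disks, the same bounds $(3a \mp r\cos\ph)^2 \le 25a^2$ on the disk and $(3a - r\cos\ph)^2 \le \tfrac{25}{4}r^2$ outside, and the same Gaussian-moment computation for the tail after substituting $t=\sqrt{\pi}(r-a)$. The only cosmetic difference is your use of the evenness of the integrand in $x$ to halve the work on the disk; note that, exactly as in the paper's own computation, the exterior piece contributes a term of order $e^{-a^2\pi}$ carrying no power of $a$, so both arguments really yield $C(1+a^2)e^{-a^2\pi/2}$, which agrees with the stated bound only for $a$ bounded away from $0$.
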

\begin{proof}
As before, the norm on $B_{2a}(0)$ can be bounded by employing (\ref{eqn:pw1-dx}) and (\ref{eqn:pw2-dx}) for the cases $x<0$ and $x \geq 0$, respectively:
\begin{align*}
	&\int_{\ann} \Big( \dx |V_\ph f_a^+(x,y)|- \dx |V_\ph f_a^-(x,y)| \Big)^2 d(x,y) \leq \\
	&\leq 2 \pi^2 \int_{\pi/2}^{3\pi/2} \int_{0}^{2 a} (3 a - r \cos \ph)^2 e^{- \pi (r^2 - 2 r a \cos \ph + a^2)} r \, dr \, d\ph + \\
	& + 2 \pi^2 \int_{-\pi/2}^{\pi/2} \int_{0}^{2 a} (3 a + r \cos \ph)^2 e^{- \pi (r^2 + 2 r a \cos \ph + a^2)} r \, dr \, d\ph \\
	&\leq 50 a^2 \pi^2 \, e^{-a^2 \pi} \Big( \int_{\pi/2}^{3\pi/2} \int_{0}^{2 a} e^{2 r a \pi \cos \ph} r \, dr \, d\ph + \int_{-\pi/2}^{\pi/2} \int_{0}^{2 a} e^{- 2 r a \pi \cos \ph} r \, dr \, d\ph \Big) \\
	& \leq 200 a^4 \pi^3 \, e^{-a^2 \pi},
	\end{align*}
	where the last line is obtained from noting that for the first summand, $\cos \ph \leq 0$, so that $e^{2 r a \pi \cos \ph} \leq 1$ and for the second summand, $\cos \ph \geq 0$, so that $e^{-2 r a \pi \cos \ph} \leq 1$. Thus, each of the two 		summands can be bounded by $2 a^2 \pi$. 
	For the estimation of the norm on $\mathbb{R}^2 \backslash \ann$, one can use the pointwise estimate (\ref{eqn:pw1-dx}) as follows:
	\begin{align*}
	&\int_{\mathbb{R}^2 \backslash \ann} \Big( \dx |V_\ph f_a^+(x,y)|- \dx |V_\ph f_a^-(x,y)| \Big)^2 d(x,y) \leq \\
	& \leq 2 \pi^2 \int_{\mathbb{R}^2 \backslash \ann} (3 a -x)^2 e^{-\pi((x-a)^2+y^2)} d(x,y) \\
	& \leq 2 \pi^2 \int_{0}^{2\pi} \int_{2a}^{\infty} (3 a -r \, \cos \ph)^2 e^{-\pi(r^2-2 a r \cos \ph + a^2)} r dr d\ph \\
	& \leq 2 \pi^2  \int_{0}^{2\pi} \int_{2a}^{\infty} \frac{25}{4} r^3 e^{-\pi(r-a)^2} dr d\ph \\
	& \leq 25 \pi^3 \int_{2a}^{\infty} r^3 e^{-\pi(r-a)^2} dr.
	\end{align*}
	Simplification of the last integral, yields
	\begin{align*}
	&25 \pi^3 \int_{2a}^{\infty} r^3 e^{-\pi(r-a)^2} dr \leq \\
	&\leq \frac{25}{4} \pi \Big(2 e^{-a^2 \pi} (1+7 a^2 \pi) + a \pi (3+2 a^2 \pi) \int_{a \sqrt{\pi}}^{\infty} e^{-t^2} dt \Big)\\
	& \leq \frac{25}{4} \pi \Big(2 e^{-a^2 \pi} (1+7 a^2 \pi) + a \pi (3+2 a^2 \pi) \int_{a \sqrt{\pi}}^{\infty} \frac{t}{a \sqrt{\pi}} e^{-t^2} dt \Big) \\
	& \leq \frac{25}{4} \pi \big( (2+\frac{3}{2} \sqrt{\pi}) + a^2 \pi (14 + \sqrt{\pi})\big) e^{-a^2 \pi}.
	\end{align*}
\end{proof}
We conclude this section by providing the last missing upper bound, the one on the $L^2$-norm of the partial derivatives w.r.t. $y$. Again, we can find pointwise estimates:
\begin{lemma}\label{lem:xneg-dy-norm}
The following bounds hold for the Gabor transform magnitudes of $f_a^+$ and $f_a^-$ for all points $(x,y) \in \mathbb{R}^2$:
\begin{align}
	\left| \dy |V_\ph f_a^+(x,y)| - \dy |V_\ph f_a^-(x,y)| \right| & \leq \sqrt{2} (|\pi y| + 2 \pi a) e^{-\frac{\pi}{2} ((x-a)^2+y^2) }, \label{eqn:pw1-dy} \\
	\left| \dy |V_\ph f_a^+(x,y)| - \dy |V_\ph f_a^-(x,y)| \right| & \leq \sqrt{2} (|\pi y| + 2 \pi a) e^{-\frac{\pi}{2} ((x+a)^2+y^2) } . \label{eqn:pw2-dy}
\end{align}
\end{lemma}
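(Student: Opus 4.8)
The plan is to mirror the strategy of Lemma~\ref{lem:xneg-dx-norm}, differentiating in $y$ rather than $x$. Writing $\eta_a^{\pm}(x,y) := |1 \pm e^{-2\pi i a y + 2\pi a x}|$, Lemma~\ref{lem:V_u} furnishes the common representation
\begin{equation*}
|V_\ph f_a^{\pm}(x,y)| = \frac{1}{\sqrt{2}}\, e^{-\frac{\pi}{2}((x+a)^2+y^2)}\,\eta_a^{\pm}(x,y).
\end{equation*}
Since $(\eta_a^{\pm})^2 = 1 \pm 2 e^{2\pi a x}\cos(2\pi a y) + e^{4\pi a x}$, the relation $2\eta_a^{\pm}\,\dy\eta_a^{\pm} = \dy(\eta_a^{\pm})^2$ gives
\begin{equation*}
\dy\eta_a^{\pm}(x,y) = \mp\,\frac{2\pi a\, e^{2\pi a x}\sin(2\pi a y)}{\eta_a^{\pm}(x,y)}.
\end{equation*}
First I would insert this into the product rule for $\dy|V_\ph f_a^{\pm}|$ and subtract, so that the difference $\dy|V_\ph f_a^+|-\dy|V_\ph f_a^-|$ splits, up to the common prefactor $\tfrac{1}{\sqrt{2}}e^{-\frac{\pi}{2}((x+a)^2+y^2)}$, into a \emph{polynomial-in-$y$ term} carrying $-\pi y(\eta_a^+-\eta_a^-)$ and a \emph{$\sin$ term} carrying $-2\pi a\, e^{2\pi a x}\sin(2\pi a y)\big((\eta_a^+)^{-1}+(\eta_a^-)^{-1}\big)$.

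For the polynomial term I would bound $|\eta_a^+-\eta_a^-|$ by the reverse triangle inequality exactly as in the $\dx$ proof. Here there are two admissible estimates, $|\eta_a^+-\eta_a^-|\le 2$ and $|\eta_a^+-\eta_a^-|\le 2 e^{2\pi a x}$. Combining the second with the elementary identity $e^{-\frac{\pi}{2}(x+a)^2}e^{2\pi a x}=e^{-\frac{\pi}{2}(x-a)^2}$ converts the $(x+a)$-Gaussian into the $(x-a)$-Gaussian and produces the contribution $\sqrt{2}\,|\pi y|\,e^{-\frac{\pi}{2}((x-a)^2+y^2)}$ required for (\ref{eqn:pw1-dy}); keeping the estimate $2$ leaves the $(x+a)$-Gaussian untouched and yields the analogous contribution for (\ref{eqn:pw2-dy}).

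The $\sin$ term is where the argument genuinely departs from Lemma~\ref{lem:xneg-dx-norm}, and I expect it to be the main obstacle, as it replaces the auxiliary bound (\ref{eqn:bound-2nd-term}). The crux is to establish the two elementary inequalities
\begin{equation*}
|\sin(2\pi a y)| \le \eta_a^{\pm}(x,y)
\qquad\text{and}\qquad
e^{2\pi a x}\,|\sin(2\pi a y)| \le \eta_a^{\pm}(x,y),
\end{equation*}
each of which follows by squaring and completing the square: the first reduces to $(\cos(2\pi a y)\pm e^{2\pi a x})^2\ge 0$, the second to $(1\pm e^{2\pi a x}\cos(2\pi a y))^2\ge 0$. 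Using the first pair gives $|\sin(2\pi a y)|\big((\eta_a^+)^{-1}+(\eta_a^-)^{-1}\big)\le 2$, so the spare factor $e^{2\pi a x}$ is free to perform the Gaussian shift and deliver the $(x-a)$-estimate; using the second pair absorbs $e^{2\pi a x}$ directly and leaves the $(x+a)$-Gaussian intact, delivering the $(x+a)$-estimate. In either case the $\sin$ term contributes $2\sqrt{2}\,\pi a$ times the appropriate Gaussian, and adding the two contributions yields precisely $\sqrt{2}(|\pi y|+2\pi a)$ times $e^{-\frac{\pi}{2}((x-a)^2+y^2)}$ and $e^{-\frac{\pi}{2}((x+a)^2+y^2)}$, which are exactly (\ref{eqn:pw1-dy}) and (\ref{eqn:pw2-dy}).
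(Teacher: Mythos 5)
Your proposal is correct and follows essentially the same route as the paper: the product rule applied to $|V_\ph f_a^\pm|=\tfrac{1}{\sqrt2}e^{-\frac{\pi}{2}((x+a)^2+y^2)}\eta_a^\pm$, the reverse triangle inequality for the $\pi y$ term, the bound $|\sin(2\pi a y)|\le\eta_a^\pm$ (which is exactly the paper's inequality (\ref{bound-2nd-term-y})), and the identity $e^{-\frac{\pi}{2}(x+a)^2}e^{2\pi a x}=e^{-\frac{\pi}{2}(x-a)^2}$. The only (minor, and perfectly valid) deviation is that for (\ref{eqn:pw2-dy}) you reuse the same representation together with the alternative bounds $|\eta_a^+-\eta_a^-|\le 2$ and $e^{2\pi a x}|\sin(2\pi a y)|\le\eta_a^\pm$, whereas the paper instead switches to the rewritten factor $\widetilde{\eta}_a^{\pm}(x,y)=|e^{-2\pi a x}\pm e^{-2\pi i a y}|$ as in Lemma \ref{lem:xneg-dx-norm}.
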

\begin{proof}
To show the validity of (\ref{eqn:pw1-dy}), we proceed as in the proof of Lemma \ref{lem:xneg-dx-norm} and obtain
\begin{align}
	\dy |V_\ph f_a^+(x,y)| &= \frac{1}{\sqrt{2}} (-\pi y) e^{-\frac{\pi}{2}((x+a)^2 + y^2)} |1+e^{-2 \pi i a y + 2 \pi a x}| \nonumber \\
	& +  \frac{1}{\sqrt{2}}e^{-\frac{\pi}{2}((x+a)^2 + y^2)} e^{2 \pi a x} 2 \pi a  \frac{-  \sin(2 \pi a y)}{ |1+e^{-2 \pi i a y + 2 \pi a x}|}\label{eqn:V-dy},
\end{align}
as well as
\begin{align*}
	\dy |V_\ph f_a^-(x,y)| &= \frac{1}{\sqrt{2}} (-\pi y) e^{-\frac{\pi}{2}((x+a)^2 + y^2)} |1-e^{-2 \pi i a y + 2 \pi a x}| \\
	& +  \frac{1}{\sqrt{2}}e^{-\frac{\pi}{2}((x+a)^2 + y^2)} e^{2 \pi a x} 2 \pi a \frac{   \sin(2 \pi a y)}{ |1-e^{-2 \pi i a y + 2 \pi a x}|}.
\end{align*}
Noting that
\begin{equation}\label{bound-2nd-term-y}
	\left| \frac{ \sin(2 \pi a y)}{ |1+e^{-2 \pi i a y + 2 \pi a x}|} \right| \leq 1, \qquad \left| \frac{\sin(2 \pi a y)}{ |1-e^{-2 \pi i a y + 2 \pi a x}|} \right| \leq 1
\end{equation}
and repeating the arguments of the proof of Lemma \ref{lem:xneg-dx-norm}, one arrives at
\begin{align*}
	& \left| \dy |V_\ph f_a^+(x,y)| - \dy |V_\ph f_a^-(x,y)| \right| \\
	& \leq \frac{1}{\sqrt{2}} |\pi y| e^{-\frac{\pi}{2}((x+a)^2+y^2)} \left| |1+e^{-2 \pi i a y+2 \pi a x}|-|1-e^{-2 \pi i a y+2 \pi a x}|\right| + \\
	& + \frac{1}{\sqrt{2}} e^{-\frac{\pi}{2}((x-a)^2+y^2)} 2 \pi a \left|  \frac{ \sin(2 \pi a y)}{ |1+e^{-2 \pi i a y + 2 \pi a x}|} +  \frac{   \sin(2 \pi a y)}{ |1-e^{-2 \pi i a y + 2 \pi a x}|}\right| \\
	& \leq  \sqrt{2} e^{-\frac{\pi}{2}((x-a)^2+y^2)} (|\pi y| + 2 \pi a).
\end{align*}
The proof of (\ref{eqn:pw2-dy}) is similar.
\end{proof}
As in the case of the partial derivatives with respect to $x$, one can derive an estimate as in Lemma \ref{lem:normbd-dx}:
\begin{lemma}\label{lem:normbd-dy}
There exists a uniform constant $C_2>0$ such that for all $a>0$ the following holds:
	$$
\Big\| \dy |V_\ph f_a^+|-\dy |V_\ph f_a^-|  \Big\|_{L^2(\mathbb{R}^2)} \leq C_2 \, a^2 \, e^{-a^2 \pi/2}.
$$
\end{lemma}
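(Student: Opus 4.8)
The plan is to follow the template already established for the $x$-derivative in Lemma \ref{lem:normbd-dx}, exploiting the two pointwise bounds (\ref{eqn:pw1-dy}) and (\ref{eqn:pw2-dy}) just proven. The structural parallel is exact: on the disk $B_{2a}(0)$ I switch between the two bounds according to the sign of $x$ (using (\ref{eqn:pw1-dy}), centered at $(a,0)$, on the half-disk $x<0$ and (\ref{eqn:pw2-dy}), centered at $(-a,0)$, on $x\geq0$), and on the exterior $\R^2\setminus B_{2a}(0)$ I use either one — say (\ref{eqn:pw1-dy}). The only genuine difference from the $x$-derivative case is that the prefactor in the pointwise bound is now $(|\pi y| + 2\pi a)$ rather than $(3a\mp x)\pi$, so the polynomial weight multiplying the Gaussian involves $|y|$ instead of $x$.

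First I would bound the contribution on $B_{2a}(0)$. Passing to polar coordinates $x = r\cos\ph$, $y = r\sin\ph$ and completing the square in the exponent exactly as in Lemma \ref{lem:normbd-dx}, the Gaussian factors out as $e^{-a^2\pi}$ times $e^{\pm 2ra\pi\cos\ph}$, each of which is bounded by $1$ on the appropriate angular range. The weight $(|\pi y|+2\pi a)^2 = (\pi|r\sin\ph|+2\pi a)^2$ is bounded on the disk by a constant times $a^2$ (since $r\leq 2a$ there), so the remaining integral $\int_0^{2\pi}\int_0^{2a} r\,dr\,d\ph$ contributes another factor $a^2$. This yields a bound of the form $C\,a^4\,e^{-a^2\pi}$, whose square root is $\lesssim a^2 e^{-a^2\pi/2}$, as required.

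Next I would bound the exterior contribution using (\ref{eqn:pw1-dy}). In polar coordinates centered so as to complete the square to $e^{-\pi(r-a)^2}$, the integrand becomes $(\pi|r\sin\ph|+2\pi a)^2 e^{-\pi(r-a)^2}r$, which after expanding and crudely bounding $|\sin\ph|\leq1$ reduces to a finite sum of one-dimensional integrals $\int_{2a}^\infty r^j e^{-\pi(r-a)^2}\,dr$ for $j\in\{1,2,3\}$. These are handled exactly as in Lemma \ref{lem:normbd-dx}: substitute $t=\sqrt{\pi}(r-a)$, use that the lower limit forces $a\leq t/\sqrt{\pi}$ to convert any leftover $\int e^{-t^2}\,dt$ into $\int t e^{-t^2}\,dt$, and integrate the resulting Gaussian moments explicitly. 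Each such integral is dominated by a polynomial in $a$ times $e^{-a^2\pi}$, so the whole exterior piece is again $\lesssim a^4 e^{-a^2\pi}$.

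Adding the two contributions and taking the square root gives the claimed bound $C_2\,a^2\,e^{-a^2\pi/2}$. I do not anticipate a real obstacle here, since the proof is essentially a transcription of Lemma \ref{lem:normbd-dx} with the weight $(3a\mp x)$ replaced by $(|\pi y|+2\pi a)$; the mildly delicate point is simply to organize the exterior integral so that the $|y|=|r\sin\ph|$ factor does not spoil the Gaussian decay, which it cannot, since after the substitution every polynomial factor is absorbed against $e^{-t^2}$ while leaving the dominant $e^{-a^2\pi}$ intact.
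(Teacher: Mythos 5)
Your argument is correct and is exactly the route the paper intends: the paper states Lemma \ref{lem:normbd-dy} without proof, deferring to the analogy with Lemma \ref{lem:normbd-dx}, and your write-up carries out precisely that transcription (split at $B_{2a}(0)$, switch between (\ref{eqn:pw1-dy}) and (\ref{eqn:pw2-dy}) according to the sign of $x$, bound the weight $|\pi y|+2\pi a$ by $O(a)$ on the disk, and reduce the exterior to Gaussian moment integrals via $t=\sqrt{\pi}(r-a)$). The one small caveat --- the exterior piece is really $O\big((1+a^2)\,e^{-a^2\pi}\big)$ rather than $O\big(a^4 e^{-a^2\pi}\big)$ near $a=0$ --- is inherited verbatim from the paper's own proof of Lemma \ref{lem:normbd-dx} and is harmless for Proposition \ref{prop:upper-bound}.
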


\section{Proofs of Section \ref{sec:sparse_reg}}\label{sec:proofs_sparse_reg}

\begin{proof}[Proof of Proposition \ref{prop:small_lp_stft}]
Since $p \in [1,2]$ and we can assume $\|g\|_{L^2(\mathbb{R})}\leq1$ without loss of generality, the basic estimate 
$$
\Big||\langle f_a^+, g_{n,k} \rangle|^p - |\langle f_a^-, g_{n,k} \rangle |^p \Big| \leq 2 \,\Big||\langle f_a^+, g_{n,k} \rangle| - |\langle f_a^-, g_{n,k} \rangle | \Big|,
$$
holds. Applying the reverse triangle inequality twice on the RHS of the above, one obtains (where we recall the definition of $u_a, u_{-a}$ in (\ref{eqn:gauss_shift}):
$$
\Big||\langle f_a^+, g_{n,k} \rangle|^p - |\langle f_a^-, g_{n,k} \rangle |^p \Big| \leq 4 \min\{ | \langle u_{-a}, g_{n,k}\rangle|, |\langle u_{a}, g_{n,k} \rangle|\}.
$$
Thus,
\begin{align}
\sum_{n,k \in \mathbb{Z}} & \Big( |\langle f_a^+, g_{n,k} \rangle|^p - |\langle f_a^-, g_{n,k} \rangle |^p \Big) w(n x_0, k y_0)^p \leq \nonumber \\
& \leq 4 \sum_{\substack{n \in \mathbb{Z}_0^+ \\ k \in \mathbb{Z}}} | \langle u_{-a}, g_{n,k}\rangle| w(n x_0, k y_0)^p + 4 \sum_{\substack{n \in \mathbb{Z}^- \\ k \in \mathbb{Z}}} | \langle u_{a}, g_{n,k}\rangle| w(n x_0, k y_0)^p. \label{eqn:lp-diff}
\end{align}
The property that $V_g \ph \in \mathcal{S}(\mathbb{R}^2)$, implies that for any $m \in \mathbb{N}$ there exists a constant $\widetilde{C}_m>0$, s.t.
$$
\big| V_g \ph (x,y) \big| \leq \widetilde{C}_m \frac{1}{\big((1+|x|)(1+|y|)\big)^m}.
$$
Further note that $| V_g u_a(x,y)| = |V_g \ph (x-a,y)|$, $| V_g u_{-a} (x,y)| = |V_g \ph (x+a,y)|$ and hence, 
\begin{align*}
|\langle u_{-a}, g_{n,k} \rangle| w(n x_0, k y_0)^p \leq \widetilde{C}_m \frac{\big((1+|x_0n|)(1+|y_0k|)\big)^{sp}}{\big((1+|x_0n+a|)(1+|y_0k|)\big)^m} \leq \widetilde{C}_m  \big((1+|x_0 n+a|)(1 + |y_0 k|)\big)^{sp-m},
\end{align*}
for $n \geq 0$, and similarly
$$
|\langle u_{a}, g_{n,k} \rangle| w(n x_0, k y_0)^p \leq \widetilde{C}_m  \big((1+|x_0 n-a|)(1 + |y_0 k|)\big)^{sp-m}, \quad \mbox{for } n<0.
$$
These estimates can now be inserted into (\ref{eqn:lp-diff}) to obtain
\begin{align*}
\sum_{n,k \in \mathbb{Z}} & \Big( |\langle f_a^+, g_{n,k} \rangle|^p - |\langle f_a^-, g_{n,k} \rangle |^p \Big) w(n x_0, k y_0)^p \leq \\
& \leq 4 \widetilde{C}_m  \Big( \sum_{\substack{n \in \mathbb{Z}_0^+ \\ k \in \mathbb{Z}}}  \big((1+|n x_0 +a|)(1 + |k y_0 |)\big)^{sp-m} + \sum_{\substack{n \in \mathbb{Z}^- \\ k \in \mathbb{Z}}}  \big((1+|n x_0 -a|)(1 + |k y_0 |)\big)^{sp-m} \Big) \\
& \leq 4 \widetilde{C}_m \Big( (1+a)^{s p-m} + 2 \int_0^\infty (1+x x_0 + a)^{sp-m} dx \Big) \cdot \Big( 1+ 2 \int_0^\infty (1+y y_0)^{sp-m} dy \Big) \\
& \leq C_m \big(1 + a\big)^{s p - m +1},
\end{align*}
for some constant $C_m$ depending on $m, x_0$ and $y_0$, where the last line follows from $m>sp+1$.
\end{proof}

\begin{proof}[Proof of Proposition \ref{prop:wavelet-decay}]
Let $p_{w,m}(t)$ be the Taylor polynomial of $\ph$ at point $w := \alpha^{-j} \beta k$ and of degree $m-1$. Then, 
\begin{equation}\label{eqn:w-nbhd}
|\ph(t) - p_{w,m}(t)| \leq e^{-\pi w^2/8} |t-w|^m, \quad \mbox{ for all } t \in I_w:=[w/2, 3w/2].
\end{equation}
On the other hand, since $\psi$ has $m$ vanishing moments, we have that
$$
\langle \ph, \psi_{j,k} \rangle = \langle \ph - p_{w,m}, \psi_{j,k} \rangle = \int_{\mathbb{R}} (\ph(t) - p_{w,m}(t)) \alpha^{j/2} \psi(\alpha^j t - \beta k) dt.
$$
To estimate this integral, we split it into a piece on $I_w$ and a piece on the remaining interval on which $\psi_{j,k}$ has small energy. Substituting $x = \alpha^j t - \beta k$ we obtain
\begin{align*}
|\langle \ph, \psi_{j,k} \rangle | & \leq  | \int_{I_w} (\ph(t) - p_{\alpha^{-j} \beta k,m}(t)) \alpha^{j/2} \psi(\alpha^j t - \beta k) dt|  + | \int_{\mathbb{R}\backslash I_w} (\ph(t) - p_{\alpha^{-j}\beta k,m}(t)) \alpha^{j/2} \psi(\alpha^j t - \beta k) dt| \\
&\leq e^{-\pi w^2/8} | \int_{I_w} |t-\alpha^{-j} \beta k|^{m} \alpha^{j/2} \psi(\alpha^j t - \beta k) dt| + | \int_{\mathbb{R}\backslash I_w} |t-\alpha^{-j} \beta k|^{m} \alpha^{j/2} \psi(\alpha^j t - \beta k) dt| \\
& \leq e^{-\pi w^2/8} \alpha^{-j(m+3/2)}  \int_{-\frac{\beta |k|}{2}}^{\frac{\beta |k|}{2}} |x|^{m} |\psi(x)| dx + \alpha^{-j(m+3/2)} \int_{\mathbb{R}\backslash [-\frac{\beta |k|}{2},\frac{\beta |k|}{2}]} |x|^{m} |\psi(x)| dx.
\end{align*}
Employing the decay property (\ref{eqn:wavelet-decay}) of $\psi$ further yields
\begin{align*}
|\langle \ph, \psi_{j,k} \rangle | & \leq \alpha^{-j(m+3/2)}  C_{2m+2} \Big(e^{-\pi w^2/8}  \int_{-\frac{\beta |k|}{2}}^{\frac{\beta |k|}{2}} \frac{|x|^{m}}{1+|x|^{2m+2}} dx + \int_{\mathbb{R}\backslash [-\frac{\beta |k|}{2},\frac{\beta |k|}{2}]} \frac{|x|^{m}}{1+|x|^{2m+2}} dx \Big) \\
& \leq \alpha^{-j(m+3/2)}  C_{2m+2} \Big(e^{-\pi w^2/8}  \beta |k| + \int_{\mathbb{R}\backslash [-\frac{\beta |k|}{2},\frac{\beta |k|}{2}]}  |x|^{-m-2} dx \Big) \\
& \leq \alpha^{-j(m+3/2)}  C_{2m+2} \Big( \beta |k| \, e^{-\pi (\alpha^{-j} \beta k)^2/8} + \frac{2^{m+2}}{(\beta |k|)^{m+1}} \Big) \\
& \leq C \, \alpha^{-j(m+3/2)} (\beta |k|)^{-m-1}.
\end{align*}
To show inequality (\ref{eqn:scaling-coeff-decay}), we note that for $\beta |k| \geq 2$:
\begin{align*}
|\langle \ph, \chi_{0,k} \rangle| &\leq \widetilde{C}_{m+2} \int_{\mathbb{R}} e^{-\pi t^2} \frac{1}{1+|t-\beta k|^{m+2}} dt \\
& \leq \widetilde{C}_{m+2} \Big( \int_{-\frac{\beta |k|}{2}}^{\frac{\beta |k|}{2}} \frac{1}{1+|t-\beta k|^{m+2}} dt + \int_{\mathbb{R} \backslash [-\frac{\beta |k|}{2}, \frac{\beta |k|}{2}]} e^{-\pi t^2} dt \Big) \\
& \leq \widetilde{C}_{m+2} \big( 2^{m+2} (\beta |k|)^{-m-1} + \frac{1}{\pi} e^{-\pi (\beta k)^2/4} \big) \leq \widetilde{C} (\beta |k|)^{-m-1}.
\end{align*}
The constant $C:= \max\{C,\widetilde{C}\}$ depends on only $m$.
\end{proof}
\textbf{Acknowledgments.} PG gratefully acknowledges the support by the Austrian Science Fund (FWF) under grant P 30148.

\bibliographystyle{abbrv}
\bibliography{Deep_Learning.bib}

\end{document}